 \newtheorem{Lemma}{Lemma}
 \newtheorem{Proposition}[Lemma]{Proposition}
\newcommand{\len}{\mathrm{len}}
\newcommand{\ent}{\mathrm{ent}}
\newcommand{\ave}{\mathrm{ave}}
 \newcommand{\GG}{\mbox{${\mathcal G}$}} 
 \newcommand{\HH}{\mbox{${\mathcal H}$}} 
 \renewcommand{\SS}{\mbox{${\mathcal S}$}}
  \newcommand{\EE}{\mbox{${\mathcal E}$}}
  \newcommand{\TT}{\mbox{${\mathcal T}$}}
  \newcommand{\MM}{\mbox{${\mathcal M}$}}
 \newcommand{\sfrac}[2]{{\textstyle\frac{#1}{#2}}}
 \newcommand{\proof}{{\bf Proof.\ }}
  \newcommand{\Ind}{{\rm 1\hspace{-0.90ex}1}}
  \newcommand{\bX}{{\mathbf X}}
    \newcommand{\bA}{{\mathbf A}}
    \newcommand{\bB}{{\mathbf B}}
 \newcommand{\bb}{\mathbf{b}}
 \newcommand{\ba}{\mathbf{a}}
 \newcommand{\bp}{\mathbf{p}}
 \newcommand{\bw}{\mathbf{w}}
 \newcommand{\Names}{\mathbf{Names}}
 \newcommand{\bL}{\Names}
   \newcommand{\qed}{\ \ \rule{1ex}{1ex}}
    \newcommand{\eps}{\varepsilon}
  \newcommand{\Ex}{{\mathbb E}}
\renewcommand{\Pr}{\mathbb{P}}
\newcommand{\GGunl}{\GG^{\mbox{{\footnotesize unl}}}}
\newcommand{\comment}[1]{}
\def\ER{Erd\H{o}s-R\'enyi}
\newcommand{\bal}[1]{\begin{align*}#1\end{align*}}
\newcommand{\baln}[1]{\begin{align}#1\end{align}}
\newcommand{\ta}[1]{^{(#1)}}
\begin{document}

\title{Entropy of Some Models of Sparse Random Graphs With Vertex-Names}
 \author{David J. Aldous\thanks{Department of Statistics,
 367 Evans Hall \#\  3860,
 U.C. Berkeley CA 94720;  aldous@stat.berkeley.edu;
  www.stat.berkeley.edu/users/aldous.  Aldous's research supported by
 N.S.F Grant DMS-1106998. }
\and Nathan Ross\thanks{Department of Statistics,
 367 Evans Hall \#\  3860,
 U.C. Berkeley CA 94720;  ross@stat.berkeley.edu.}
}

 \maketitle

 \begin{abstract}
Consider the setting of sparse graphs 
on $N$  vertices, where the vertices have distinct 
``names", which are strings of length $O(\log N)$  from a fixed finite alphabet.  For many natural probability models, the entropy grows 
as $c N \log N$ for some model-dependent rate constant $c$. 
The mathematical content of this paper is the (often easy) calculation of $c$ for a variety of models, in particular for various standard random graph models adapted to this setting.
Our broader purpose is to publicize this particular setting as a natural 
setting for future theoretical study of data compression for graphs, and (more speculatively) for discussion of 
 unorganized versus organized complexity.
 \end{abstract}
 \vspace{0.1in}

{\em MSC 2000 subject classifications:}  05C80, 60C05, 94A24.

{\em Key words and phrases.} 
Entropy,
local weak convergence, 
complex network,
random graph, 
Shannon entropy,
sparse graph limit.

 \vspace{0.4in}

 {\em Short title:}
Entropy of Sparse Random Graphs

 \newpage
 \section{Introduction}
 \label{sec-INT}

The concept {\em entropy} arises across a broad range of topics
within the mathematical sciences, with different nuances and applications. 
There is a substantial literature (see Section~\ref{sec:concept}) on topics linking entropy and graphs, 
 but our focus seems different from these.
In this paper we use the word only with its most elementary meaning: 
for any probability distribution $\bp = (p_s)$ on any finite set $S$, 
its {\em entropy} is the number 
\begin{equation}
 \ent(\bp) = - \sum_s p_s \log p_s .
\label{ent-def}
\end{equation}
For an $S$-valued random variable $X$ we abuse notation by writing 
$\ent(X)$ for the entropy of the distribution of $X$.

Consider an $N$-vertex undirected graph. 
Instead of the usual conventions about vertex-labels 
(unlabelled; labeled by a finite set independent of $N$; labeled by integers 
$1,\ldots,N$) our convention is that there is a fixed 
(i.e. independent of $N$) alphabet $\bA$ of size $2 \le A < \infty$ and that each vertex
has a different ``name", which is a length-$O(\log N)$ string 
$\ba = (a_1,\ldots,a_m)$ of letters from $\bA$.

We will consider  probability distributions over such graphs-with-vertex-names, in the $N \to \infty$ ``sparse graph limit"  where the number of edges is $O(N)$.
In other words we study random graphs-with-vertex-names $\GG_N$ whose average degree is $O(1)$.
In this particular context (see Section~\ref{sec:TSU} for discussion) one expects  
that the entropy should grow as 
\begin{equation}
 \ent(\GG_N) \sim c N \log N,
\label{c-def}
\end{equation}
 where $c$ is thereby interpretable as 
an ``entropy rate".  
 Note the intriguing curiosity that 
the numerical value of the entropy rate $c$ does not depend on the base 
of the logarithms, because there is a ``log" on both sides of the 
definition (\ref{c-def}), and indeed we will mostly avoid specifying the base.

In Section~\ref{sec:easy} we define and analyze a variety of models for which calculation of entropy rates is straightforward. 
In Section~\ref{sec:hard} we study one more complicated model.  This is the {\em mathematical} content of the paper.
Our motivation for studying entropy in this specific setting is discussed verbally in Section~\ref{sec:BP}, and this discussion 
is the main {\em conceptual} contribution of the paper.
The discussion is independent of the subsequent mathematics but may be helpful in formulating interesting probability models for future study.   
Section~\ref{sec:backg} gives some (elementary) technical background. 
Section \ref{sec:final} contains final remarks and open problems.

\section{Remarks on data compression for graphical\\ structures}
\label{sec:BP}
The well-known textbook~\cite{MR2239987} provides an account of the classical Shannon setting of data compression for sequential data, 
motivated by English language text modeled as a stationary random sequence.  
What is the analog for graph-structured data?

This is plainly a vague question.  
Real-world data rarely consists only of the abstract mathematical structure -- unlabelled vertices and edges -- of a graph; typically  a considerable  amount of context-dependent extra information is also present. 
Two illustrative examples:\\
(i) Phylogenetic trees on species; here part of the data is the names of the species 
 and the names of clades; \\
(ii) Road networks; here part of the data is the names or numbers of the roads 
and some indication of the locations  where roads meet.

\smallskip
\noindent 
Our setting is designed as one simple abstraction of ``extra information",
in which the (only) extra information is 
the ``names" attached to vertices. 
Note 
 that in many examples one expects  some association between the names and the graph structure, in that the names of two vertices which are adjacent 
will on average be ``more similar" in some sense than 
the names of two non-adjacent vertices. 
This is very clear in the phylogenetic tree example, because of the 
genus-species naming convention. 
So when we study toy probability models later, we want models featuring 
such association. 

Let us remind the reader of two fundamental facts from 
information theory \cite{MR2239987}.

\noindent
(a) In the general setting (\ref{ent-def}), there 
there exists a coding (e.g.\ Huffman code) $f_\bp: S \to \bB$ 
such that, for $X$ with distribution $\bp$, 
\[ \ent(\bp) \le \Ex  \ \len (f_\bp(X)) \le \ent(\bp) + 1  \]
and no coding can improve on the lower bound. 
Here $\bB$ denotes the set of finite binary strings 
$\bb = b_1b_2 \ldots b_m$ and $\len (\bb) = m$ denotes the length of a string and entropy is computed to base $2$.  
Recall that a coding is just a $1 - 1$ function.

\noindent
(b) In the classical Shannon setting, one considers   
a stationary ergodic sequence $\bX = (X_i)$ with values in a finite alphabet. 
Such a sequence has an {\em entropy rate} 
\[
H := \lim_{k \to \infty} k^{-1} \ent(X_1,\ldots,X_k) .
\]
Moreover there exist coding functions $f$ (e.g.\ Lempel-Ziv) which are {\em universal} in the sense that for every such stationary ergodic sequence, 
\[ \lim_{m \to \infty} 
m^{-1}  \Ex  \ \len (f(X_1, \ldots,X_m)) = H . \]

\noindent
The important distinction is that in (a) the coding function $f_\bp$ depends on the distribution of $\bX$ but in (b) the coding function $f$ is a function on finite sequences which does not depend on the distribution of $\bX$.

In our setting of graphs with vertex-names we can in principle apply (a), but it will typically be very unrealistic to imagine that observed real-world data is a realization from  some {\em known} probability distribution on such graphs.
At the other extreme, for many reasons one cannot expect there to exist, in our setting, 
``universal" algorithms analogous to (b).  
For instance, the vertex-names $(\ba, \ba^*)$  across some edges might be related by a deterministic cryptographic function.
Also note it is difficult to imagine a definition analogous to ``stationary" in our setting.
So it seems necessary to rely on heuristic algorithms for compression, 
where {\em heuristic} means only that there is no good theoretical guarantee on compressed length.
One could of course compare different heuristic algorithms at an empirical level by testing them on real-world data. 
As a theoretical complement, one could test an algorithm's efficiency  by trying to prove  that, for some wide range of  qualitatively different probability models for $\GG_N$, the 
algorithm behaves optimally in the sense of compressing to
mean length $(c + o(1)) N \log N$ where $c$ is the entropy rate 
(\ref{c-def}).
And the contribution of this paper is to provide a collection of
probability models for which we know the numerical value of $c$.

\subsection{Remarks on the technical setup}
\label{sec:TSU} 
The discussion above did not involve
two extra assumptions made in Section \ref{sec-INT}, 
that the graphs are sparse and that 
the length of names is $O(\log N)$ 
(note the length must be at least order $\log N$ to allow the names
to be  distinct).
These extra assumptions create a more focussed setting for data compression that is
mathematically interesting for two reasons.  
If the entropies of the two structural components -- the unlabelled graph, and  the set of names --  were of different orders,  then only the larger one would be important; 
but these extra assumptions make both entropies be of the same order,  $N \log N$.  
So both of these two structural components and their association become relevant for compression.
A second, more technical, reason is that natural models 
of sparse random graphs $\GG_n$ invariably have a well-defined limit 
$\GG_\infty$  in the sense of 
{\em local weak convergence} \cite{MR2023650,MR2354165}
of unlabelled graphs, and the limit $\GG_\infty$  automatically has a property 
{\em unimodularity} directly analogous to stationarity for 
random sequences.  
This addresses part of the  ``difficult to imagine a definition analogous to stationary in our setting" issue raised above, but it remains difficult  
to extend this notion to encompass the vertex-names.

\subsection{Related work}
\label{sec:concept}
We have given a verbal argument that the Section \ref{sec-INT} setting of sparse graphs with vertex-names 
is a worthwhile setting for future theoretical study of data compression in graphical structures.
It is perhaps surprising that this precise setting has apparently not been 
considered previously. 
The large literature on what is called ``graph entropy", recently surveyed 
in \cite{MR2737460}, deals with statistics of a single unlabelled graph, 
which is quite different from our setting. 
Data compression for  graphs
with a fixed alphabet is considered in \cite{MR1984489}.
In a different direction,  the case of {\em sequences} of length $N$ with increasing-sized alphabets 
is considered in \cite{MR2095850,MR2817014}.  
Closest to our topic is \cite{SZP}, discussing entropy and 
explicit compression algorithms for \ER\ random graphs.
But all of this literature deals with settings that seem  ``more mathematical" than ours, in the sense 
of being  less closely related  to compression of 
real-world graphical structures involving extra information.

On the applied side, there is considerable discussion of 
heuristic compression algorithms designed to exploit expected features of graphs arising in particular contexts, for instance 
WWW links 
\cite{Boldi03thewebgraph}
and social networks
\cite{Chierichetti:2009}.
What we proposed in the previous section as future research is to try to bridge the gap between that work and mathematical theory by seeking to devise and study general purpose heuristic algorithms.

On a more speculative note, we have a lot of sympathy with the view 
expressed by John Doyle and co-authors
\cite{doyle-anderson}, 
who argue that the ``organized complexity" one sees in real world 
evolved biological and technological networks is essentially different from the ``disorganized complexity" produced by probability models of random graphs.  
At first sight it is unclear how one  might try to demonstrate this distinction at some statistical level.  But producing a heuristic algorithm that codes some class of real-world networks to lengths 
smaller than the entropy of typical  probability models of such networks would be rather convincing.

\section{A little technical background}
\label{sec:backg}
Here are some elementary facts \cite{MR2239987} about entropy, 
in the setting (\ref{ent-def}) of a $S$-valued r.v. $X$, 
which we will use without comment.
\begin{eqnarray*}
\ent(X) & \le& \log |S| \\
\ent(X,Y) &\le & \ent(X) + \ent(Y)\\
\ent(X) & \ge & \ent(h(X)) \mbox{ for any } h:S \to S^\prime.
\end{eqnarray*}
These inequalities are equalities if and only if, respectively,

$X$ has uniform distribution on $S$

$X$ and $Y$ are independent 

$h$ is $1 - 1$ on the range of $X$.

\noindent
Also, if $\bar{\theta} = \sum_s q_s \theta_s$, 
where $q = (q_s)$ and each $\theta_s$ is a probability distribution, then 
\begin{equation}
 \ent(\bar{\theta}) \le \ent(q) + \sum_s q_s \ \ent(\theta_s) 
\label{H-mixing}
\end{equation}
with equality if and only if the supports of the $\theta_s$ are essentially disjoint.  In random variable notation, 
\begin{equation}
\ent(X) = \ent (f(X)) + \Ex \ent(X \vert f(X))
\label{H-condit}
\end{equation} 
where the random variable $\ent(X \vert Y)$ denotes entropy of the conditional distribution.  
(Note this is what a probabilist would call ``conditional entropy", though information theorists use that phrase to mean $\Ex \, \ent(X \vert Y)$). 
Write 
\[ \EE(p) = - p \log p - (1-p) \log (1-p) \]
for the entropy of the Bernoulli($p$) distribution.  
We will often use the fact 
\begin{equation}
 \EE(p) \sim  p \log \sfrac{1}{p} \mbox{ as } p \downarrow 0. 
\label{Bplim}
\end{equation}
We will also often use the following three basic crude estimates.
First,
\begin{equation}
\mbox{ if } K_m \to \infty \mbox{ and } \sfrac{K_m}{m} \to 0 
\mbox{ then } \log {m \choose K_m} \sim K_m \log \sfrac{m}{K_m} .
\label{mK}
\end{equation}
Second, for $X(n,p)$ with Binomial$(n,p)$ distribution, 
if $0\leq x_n \leq np$ and $x_n/n \to x \in [0,p]$ then
\begin{equation}
\log \Pr(X(n,p) \le x_n) = -n \Lambda_p(x_n/n) + \textrm{O}(\log n ), \label{BinLD}
\end{equation}
where $\Lambda_p(x) := x \log \sfrac{x}{p} + (1-x) \log \sfrac{1-x}{1-p}$.
The first order term is standard from large deviation theory and the second order
estimate follows from finer but still easy analysis; see for example 
Lemma~2.1 of~\cite{kamc10}.
Third, write $G[N,M]$ for the number of graphs on 
vertex-set $1,\ldots,N$ with at most $M$ edges.
It easily follows from~\eqref{BinLD} that
\begin{equation}
\mbox{if } \sfrac{M}{N} \to \zeta \in [0,\infty)  \mbox{ then }
 \frac{\log G[N,M]}{N \log N} \to \zeta. \label{GMN}
\end{equation}

\section{Easy examples}
\label{sec:easy}
Standard models of random graphs on vertices 
labelled $1, \ldots, N$ can be adapted to our setting of vertex-names in several ways.  In particular, one could either\\
(i) re-write the integer label in binary, that is as a binary string; or\\
(ii) replace the labels by distinct random strings as names.\\
These two schemes are illustrated in the first two examples below.

We present the results in a fixed format: 
a name for the model as a subsection heading, a definition of the model $\GG_N$, 
typically involving parameters $\alpha, \beta, \ldots$, and a Proposition giving a formula for the entropy rate $c = c(\alpha, \beta, \ldots)$ such that
\[  \ent(\GG_N) \sim c N \log N \mbox{ as } N \to \infty .\]
Model descriptions and calculations sometimes implicitly assume $N$ is sufficiently large.

These particular models are ``easy" in the specific sense that independence of edges allows us to write down an exact expression for entropy; then calculations establish the asymptotics.  
We also give two general results, Lemmas~\ref{L1} and~\ref{L2},
showing that graphs with short edges, or with  
similar names between connected vertices, have entropy rate zero. 

\subsection{Sparse \ER, default binary names}

{\bf Model.} $N$ vertices, whose names are the integers 
$1,\ldots, N$ written as binary strings of length $\lceil \log_2 N \rceil$.
Each of the ${N \choose 2}$ possible edges is present independently with probability $\alpha/N$, where $0<\alpha<\infty$.

\smallskip \noindent
{\bf Entropy rate formula.}
$c(\alpha) = \sfrac{\alpha}{2}$.

\smallskip  \noindent \proof
The entropy equals 
${N \choose 2} \EE(\alpha/N)$; 
letting $N \to \infty$ and using (\ref{Bplim}) gives the formula.

\subsection{Sparse \ER, random $A$-ary names}
\label{sec:SpERb}

{\bf Model.} As above, $N$ vertices, and
each of the ${N \choose 2}$ possible edges is present independently with probability $\alpha/N$.  Take $L_N \sim \beta \log_A N$ for 
$1 <  \beta < \infty$ and take the vertex names 
as a uniform random choice of $N$ distinct $A$-ary strings of length $L_N$.

\smallskip \noindent
{\bf Entropy rate formula.}
$c(\alpha, \beta) = \beta - 1 + \frac{\alpha}{2}$.

\smallskip  \noindent \proof
The entropy equals 
$\log{A^{L_N} \choose N} +{N \choose 2} \EE(\alpha/N)$. 
The first term $\sim (\beta - 1) N \log N$ by (\ref{mK}) and the second 
term $\sim \frac{\alpha}{2} N \log N$ as in the previous model.

\smallskip \noindent
{\bf Remark.} One might have naively guessed that the formula would involve $\beta$ 
instead of $\beta  -1$, on the grounds that the entropy of the sequence of names is $\sim \beta N \log N$, but this is the rate in a third model where 
a vertex name is a pair $(i,\ba)$, where $1 \le i \le N$ and $\ba$ is the random string.  
This model distinction  becomes more substantial for the model to be studied
in Section~\ref{sec:hard}.

\subsection{Small Worlds Random Graph}
\label{sec:SW} 

{\bf Model.}
Start with $N=n^2$ vertices arranged in an $n\times n$ discrete torus,  
where the name of each vertex is its coordinate-pair $(i,j)$ written 
as two binary strings of lengths  $\lceil \log_2 n \rceil$.  
Add the usual edges of the degree-$4$ nearest neighbor torus graph.
 Fix parameters 
$0 < \alpha, \gamma < \infty$. 
For each edge $(w,v)$ of the remaining set $S$ of ${N \choose 2}-2N$ possible edges in the graph, add the edge independently 
with probability $p_N(||w-v||_2)$, where $p_N(r)=a r^{-\gamma}$ and $a:=a_{N,\gamma}$ is chosen such that the mean degree of the graph 
$\GG_N$ of these random edges $\to \alpha$ as $N \to \infty$ 
(see (\ref{sw-d1},\ref{sw-d2}) for explicit expressions)
and the Euclidean distance $||w-v||_2$ is taken using the torus convention. 

\smallskip \noindent
{\bf Entropy rate formula.}
\begin{eqnarray*}
c(\alpha, \gamma) &=& \alpha/2, \quad 0 < \gamma < 2 \\
&=& \alpha/4, \quad \gamma = 2 \\
&=& 0,  \quad \quad 2< \gamma < \infty .
\end{eqnarray*}

\smallskip \noindent
{\bf Remark.} The different cases arise because for $\gamma < 2$ the 
edge-lengths are order $n$ whereas for $\gamma > 2$ they are $O(1)$.

\smallskip  \noindent \proof 
Write $r_{i,j}=\sqrt{i^2+j^2}$ and $p_{i,j}=p_N(r_{i,j})$.  The degree $D(v)$  of vertex $v$ in $\GG_N$ (assuming $n$ is odd -- 
the even case is only a minor modification) has mean
\begin{align} 
\Ex D(v)-4 &= 4\sum_{i,j=1}^{(n-1)/2} p_N(r_{i,j})+ 4\sum_{i=2}^{(n-1)/2} p_N(r_{i,0}) \notag \\
	&=    a\left( 4 \sum_{i,j=1}^{(n-1)/2} (i^2+j^2)^{-\gamma/2}+ 4 \sum_{i=2}^{(n-1)/2} i^{-\gamma}\right)      .             \label{sw1}
\end{align}
Similarly, the entropy  
of $\GG_N$ is exactly 
\begin{equation}
 \label{2}
\ent(\GG_N)=\frac{N}{2}\left(4\sum_{i,j=1}^{(n-1)/2} \EE(p_N(r_{i,j}))+ 4\sum_{i=2}^{(n-1)/2} \EE(p_N(r_{i,0}))\right).
\end{equation}
One can analyze these expressions separately in the three cases.
First consider the ``critical" case  $\gamma = 2$.  Here the quantity in parentheses in (\ref{sw1}) 
is $\sim \int_1^{(n-1)/2} 2\pi r^{-1} dr \sim 2 \pi  \log n \sim \pi\log N$.  
We therefore take 
\begin{equation}
a = a_{N,1} \sim \sfrac{\alpha}{\pi \log N}
\label{sw-d1}
\end{equation}
so that $\GG_N$ has mean degree  $\to 4+\alpha$. 
Evaluating the entropy similarly, 
where in the second line the ``$\log a$" term is asymptotically negligible,
\begin{eqnarray*}
\ent(\GG_N) &\sim& \frac{N}{2} 
\int_1^{(n-1)/2} 2\pi r \  \EE(ar^{-2}) dr \\
 &\sim& N \pi  
\int_1^{(n-1)/2} r  \cdot ar^{-2} \cdot ( - \log a + 2 \log r ) \ dr \\
 &\sim& 2N \pi  a
\int_1^{(n-1)/2} r^{-1} \log r  \ dr \\
 &\sim& 2N \pi  a \cdot \sfrac{1}{2} \log^2 n \\
&\sim &\sfrac{\alpha}{4} N \log N
\end{eqnarray*} 
giving the asserted entropy rate formula in this case $\gamma = 2$.

In the case $\gamma < 2$, 
more elaborate though straightforward calculations 
(see appendix)
show that to have the mean degree $\to \alpha+4$ we take
\begin{equation}
a = a_{N,\gamma} \sim \alpha \kappa_\gamma  N^{-1 + \gamma/2} ; 
\quad \kappa_\gamma = \frac{2 - \gamma}{2^{1+\gamma} 
\int_0^{\pi/4} \sec^{2-\gamma}(\theta) d\theta} . 
\label{sw-d2}
\end{equation} 
and then establish the asserted entropy rate $\alpha/2$.

In the case $\gamma > 2$ the mean length of the edges of $\GG_N$ 
becomes $O(1)$. 
One could repeat calculations for this case, but the asserted zero 
entropy rate follows from the more general Lemma \ref{L1} later, 
as explained in Section~\ref{sec:zero_rate}.

\smallskip \noindent
{\bf Remark.} 
The case $\gamma < 2$ suggests a general principle that
models with  ``long edges" should have the same entropy rates as if 
the edges were uniform random subject to the same degree distribution. 
But there seems no general  formulation 
of such a result without explicit dependence assumption.

\subsection{Edge-probabilities depending on Hamming distance}

We first describe a general model, then the specialization that
we shall analyze.

{\bf General model.} 
Fix an alphabet $\bA$ of size $A$.
For each $N$ choose $L_N$ such that $N \le A^{L_N}$, 
and suppose $L_N \sim \beta \frac{\log N}{\log A}$ for some 
$\beta \in [1,\infty)$.
Take $N$ vertex-names as a uniform random choice of distinct length-$L_N$ strings from $\bA$. 
Write $d_H(\ba,\ba^\prime) = |\{i: a_i \neq a^\prime_i\}|$ for Hamming distance between names.  
For each $N$ let $\bw = \bw^N$ be a sequence of decreasing weights 
$1 = w(1) \ge w(2) \ge \ldots \ge w(L_N) \ge 0$.
We want the probability of an edge between vertices 
$(\ba,\ba^\prime)$ to be proportional to $w(d_H(\ba,\ba^\prime))$.
For each vertex $\ba$, the expectation of the sum of 
$w(d_H(\ba,\ba^\prime))$ over other vertices $\ba^\prime$ equals
\begin{equation}
 \mu_N:=
\frac{N-1}{1 - A^{-L_N}} 
\sum_{u=1}^{L_N} {L_N \choose u} \left(\frac{A-1}{A}\right)^u \left(\frac{1}{A}\right)^{L_N-u} \ w(u). \label{Hamm-1}
\end{equation}
Fix $0 < \alpha < \infty$, and make a random graph $\GG_N$ with mean degree $\alpha$ by specifying that, conditional on the set of vertex-names, each possible edge 
$(\ba,\ba^\prime)$ is present independently with probability 
$\alpha w(d_H(\ba,\ba^\prime)) / \mu_N$.

Note that in order for this model to make sense, we need $\mu_N\geq\alpha$, which is not guaranteed
by the description of the model. 

Intuitively, we expect  that the lengths (measured by Hamming distance) of edges
will be around the $\ell_N$ maximizing $\binom{L_N}{\ell_N} \ w^N(\ell_N) $, and that for all (suitably regular) choices of $\bw^N$ with 
$\ell_N/N \to d \in [0,1]$ the entropy rate will involve $\bw^N$ only via the limit $d$. 
Stating and proving a general such result seems messy, so
we will study only the special case 
\begin{eqnarray}
w(u)&=&1, \  1\leq u\leq M_N; \label{wu} \\
  &=& 0 ,\  M_N <u \le L_n \nonumber \\ 
M_N/L_N &\to & d\in(0,1 - \sfrac{1}{A})  \\
1\leq &\beta&<  \frac{\log A}{\Lambda_{1 - 1/A}(d) } \label{betaUB}
\end{eqnarray}
for $\Lambda_p(d)$ as at (\ref{BinLD}).  
Here condition (\ref{betaUB}) is needed, as we will see at (\ref{munb}), to 
make $\mu_N \to \infty$.  
Note that for the case $d = 0$ one could use Lemma \ref{L1} later and the accompanying conditioning argument in Section~\ref{sec:zero_rate} to show that the entropy rate of $\GG_N$ equals the rate ($\beta - 1$) for the 
set of vertex-names.  
The opposite case 
 $(1 - 1/A) \le d \le 1$ is essentially the model of Section~\ref{sec:SpERb} 
and the rate becomes $\beta - 1 + \alpha/2$: as expected, these rates are the $d \to 0$ and the $d \to  1 - 1/A$ limits of the rates in our formula below.

\smallskip \noindent
{\bf Entropy rate formula.}  
In the special case (\ref{wu} - \ref{betaUB}),
\[
c(A, \alpha, \beta, d)=\beta-1+\frac{\alpha}{2}\left(1-\frac{\beta\Lambda_{1-1/A}(d)}{\log A}\right) .
\]
To establish this formula, first observe that
 for Binomial $X(\cdot,\cdot)$ as at (\ref{BinLD})
\baln{
\mu_N=\frac{N-1}{1 - A^{-L_N}} \Pr(1\leq X(L_N,1 - 1/A) \leq M_N), \label{bdt2}
}
and so by  (\ref{BinLD}) 
\begin{equation}
 \frac{\log \mu_N}{\log N} \to 1 -  \frac{ \beta \ \Lambda_{1-1/A}(d) }{\log A} .
\label{munb}
\end{equation}
So condition (\ref{betaUB}) ensures that $\mu_N \to \infty$ and therefore the model makes sense.
Write $\Names$ (to avoid overburdening the reader with symbols) for the random unordered set of vertex-names, 
and use (\ref{H-condit}) to write
\[ \ent(\GG_N) = \ent(\bL) + \Ex \, \ent(\GG_N \vert \bL) . \]
As in Section~\ref{sec:SpERb} the contribution to the entropy rate from the first term is 
$\beta - 1$.  For the second term, write
\[ \ent(\GG_N \vert \bL) = \sum_{\ba\not=\ba^\prime} 
 \EE\left(\sfrac{\alpha}{\mu_N}\right)  \Ind_{(\ba \in \bL, \ba^\prime \in \bL)} 
\Ind_{(d_H(\ba,\ba^\prime) \le M_N)} 
\]
where the sum is over unordered pairs $\{\ba,\ba^\prime\}$ in $\bA^{L_N}$.
Take expectation to get 
\[ \Ex \, \ent(\GG_N \vert \bL) =
\frac{\binom{N}{2}}{1 - A^{-L_N}} 
\sum_{u=1}^{M_N} {L_N \choose u} \left(\frac{A-1}{A}\right)^u \left(\frac{1}{A}\right)^{L_N-u} \EE\left(\frac{\alpha}{\mu_N}\right) .
\]
But from the definition (\ref{Hamm-1}) of $\mu_N$ this simplifies to 
$\frac{N}{2} \mu_N \EE(\alpha/\mu_N)$, 
and then from (\ref{Bplim})
\[ \Ex \, \ent(\GG_N \vert \bL)  \sim \frac{\alpha N}{2}\ \log \mu_N.  \]
Appealing to (\ref{munb}) establishes the entropy rate formula.

\subsection{Non-uniform and uniform random trees}
\label{sec:tree}
{\bf Model.}
Construct a random tree $\TT_N$ on vertices $1,\ldots, N$ as follows.  
Take $V_3, V_4, \ldots, V_N$ independent uniform on 
$\{1,\ldots,N\}$.  Link vertex $2$ to vertex $1$.  
For $k = 3,4,\ldots, N$ link vertex  $k$ to vertex $\min(k-1, V_k)$.

\smallskip \noindent
{\bf Entropy rate formula.}
$c = 1/2$.

\smallskip  \noindent \proof 
\[ \ent(\TT_N) = \sum_{k=3}^N \ent(W_k) \]
where $W_k = \min(k-1, V_k)$ has entropy 
\[ \ent(W_k) = \sfrac{k-2}{N} \log N + \sfrac{N-k+2}{N} \log \sfrac{N}{N-k+2} \]
The sum of the first term $\sim \frac{1}{2} N \log N$ and the sum of the second term is of smaller order.

\smallskip \noindent
{\bf Remark.}  This tree arose in \cite{MR1085326}, where it was shown (by an indirect argument)  that
if one first constructs $\TT_N$, then applies a uniform random permutation to the vertex-labels, 
 the resulting  random tree $\TT^*_N$ is uniform on the set of all labelled trees.  Cayley's formula tells us there are $N^{N-2}$ labelled trees, so 
$\ent(\TT^*_N) = \log N^{N-2}$and so 
 $(\TT^*_N)$ has entropy rate $c = 1$.

\subsection{Conditions for zero entropy rate}
\label{sec:zero_rate}
Here we will give two complementary conditions under which the 
entropy rate is zero.  
Lemma \ref{L1} concerns the case where we start with deterministic vertex-names, and add random edges which mostly link a vertex to some of the 
``closest" vertices, specifically to vertices amongst the
($o(N^\eps)$ for all $\eps > 0$) closest vertices. 
Lemma \ref{L2} concerns the case where we start with a determinstic graph 
on unlabelled vertices, and add random vertex-labels such that vertices linked by an edge mostly have names that differ in only $o(\log N)$ places.
Note that these lemmas may then be applied conditionally.  
That is, if we start with a random unordered set of names, and then 
(conditional on the set of names) add random edges in a way satisfying the 
assumptions of Lemma \ref{L1},
then the entropy rate of the resulting $\GG_N$  will equal the entropy rate of the 
original random unordered set of names.
Similarly,  if we start with a random graph 
on unlabelled vertices, then (conditional on the graph) add random names 
 in a way satisfying the 
assumptions of Lemma \ref{L2},
then the entropy rate of the resulting $\GG_N$  will equal the entropy rate of the original random unlabelled graph.

\begin{Lemma}\label{L1}
For each $N$, suppose we take $N$ vertices with deterministic names 
(w.l.o.g.  $1 \le i \le N$ written as binary strings, to fit our set-up) and suppose for each $i$ we are given an ordering 
$j(i,1), j(i,2), \ldots, j(i,N-1)$ of the other vertices. 
Say that an edge $(i, j = j(i,\ell))$ with $i<j$ has length $\ell$.
Consider a sequence of random graphs $\GG_N$ whose distribution is arbitrary subject to \\
(i) The number $E_N$ of edges satisfies 
$\Pr(E_N>N\beta)=\textrm{o}(N^{-1} \log N)$ for some constant $\beta < \infty$;\\
(ii) For some $M_N$  such that 
$\log(M_N)=\textrm{o}(\log N)$, the r.v. \\
\hspace*{0.5in} $X_N := \mbox{
number of edges with length greater than $M_N$} $ \\
satisfies $\Pr( X_N > N\delta)= \textrm{o}(1)$ for all $\delta>0$.\\
Then the entropy rate is $ c = 0$.
\end{Lemma}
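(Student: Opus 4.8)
The plan is to bound $\ent(\GG_N)$ by counting the number of ``typical'' graphs, using the two structural restrictions in hypotheses (i) and (ii) to show that this count is $N^{o(N)}$, i.e. $\log(\#\text{typical graphs}) = o(N\log N)$. Since the names are deterministic, all the entropy comes from the choice of edge set, so $\ent(\GG_N)$ is the entropy of a random subset of the $\binom{N}{2}$ possible edges. First I would split off the atypical part: let $B_N$ be the event that $E_N \le N\beta$ \emph{and} $X_N \le N\delta$. By (i) and (ii), $\Pr(B_N^c) \to 0$, and in fact $\Pr(E_N > N\beta) = o(N^{-1}\log N)$; using $\ent(\GG_N \mid B_N^c) \le \log G[N, \binom{N}{2}] = O(N^2)$ on the bad event together with $\ent(\GG_N) \le \ent(\Ind_{B_N}) + \Pr(B_N)\ent(\GG_N\mid B_N) + \Pr(B_N^c)\ent(\GG_N\mid B_N^c)$, one sees the bad-event contribution is $o(N^{-1}\log N)\cdot O(N^2) = o(N\log N)$, so it suffices to bound $\ent(\GG_N \mid B_N)$, which is at most $\log$ of the number of graphs consistent with $B_N$.

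Next I would count graphs $G$ with at most $N\beta$ edges, of which all but at most $N\delta$ have length $\le M_N$. Such a $G$ is specified by: (a) choosing the $\le N\delta$ ``long'' edges arbitrarily among all $\binom{N}{2}$ pairs --- at most $G[N, N\delta]$ choices, which by \eqref{GMN} is $N^{(\delta + o(1))N}$; and (b) choosing the remaining $\le N\beta$ ``short'' edges, each of which, for its lower endpoint $i$, is one of the $M_N$ edges $(i, j(i,\ell))$ with $\ell \le M_N$ --- so each short edge has at most $N M_N$ choices (pick the lower endpoint, then pick $\ell \le M_N$), giving at most $(NM_N)^{N\beta}$ choices for the whole short-edge set, hence $\log$ at most $N\beta(\log N + \log M_N) = N\beta\log N(1 + o(1))$ by hypothesis (ii)'s condition $\log M_N = o(\log N)$. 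Combining, $\log(\#\text{typical graphs}) \le (\delta + \beta + o(1))N\log N$.

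Putting the pieces together, $\ent(\GG_N) \le (\delta + \beta + o(1))N\log N + o(N\log N)$ for every $\delta > 0$. This gives $\limsup_N \ent(\GG_N)/(N\log N) \le \beta$, which is not yet zero; the fix is to apply the same argument with $\beta$ replaced by an arbitrarily small constant only on the event $\{E_N \le N\beta_0\}$ is not available, but instead I would stratify by the \emph{actual} value of $E_N$: condition on $E_N = m$, and note that the count of short-edge sets of size exactly $m - (\text{long count})$ with $M_N$ choices per short edge contributes $\log \le m\log(NM_N)$, while summing over the $O(N^2)$ values of $m$ and over long-edge counts up to $N\delta$ costs only $O(\log N)$ extra in the exponent. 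Then use (i) in the sharper form: write $\ent(\GG_N) = \ent(\GG_N \Ind_{E_N \le N\delta}) + (\text{correction})$ --- actually the cleanest route is to replace $\beta$ by $\delta$ throughout by observing that the ``short'' edges, being $\le M_N$ per endpoint, contribute rate $0$ \emph{regardless of how many there are up to $N\beta$} only gives rate $\beta$; so I genuinely need $\delta\to 0$ to also control the short-edge \emph{count}. The resolution: hypothesis (ii) lets $\delta\to0$ for long edges, and for short edges I bound their number by $E_N \le N\beta$ but their \emph{location entropy} per edge is only $\log(NM_N) = \log N(1+o(1))$; to kill the factor $\beta$ I instead observe we may take the length threshold $M_N$ and re-examine: the correct statement is that short edges contribute at most $N\beta \log M_N \cdot$ — no.

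Let me restate the genuine argument cleanly: a typical graph is determined by (a) its $\le N\beta$-element edge set's \emph{length sequence} --- but rather than that, determined by, for each vertex $i$, the set of lengths $\ell$ such that $(i,j(i,\ell))$ is a short edge down from $i$. Each short edge is counted once (from its lower endpoint), there are $\le N\beta$ of them, each specified by (lower endpoint $\in [N]$, length $\in [M_N]$): total $\log \le N\beta\log N + N\beta\log M_N$. The term $N\beta\log M_N = o(N\log N)$, but $N\beta\log N$ is order $N\log N$ with constant $\beta$ --- so this approach as stated proves rate $\le \beta$, not $0$. The trick that makes it $0$: we do not need to name the lower endpoint freshly for each short edge; the long edges already pin down $o(N)$ vertices, and for the short subgraph, \textbf{each vertex $i$ contributes a subset of $\{1,\dots,M_N\}$ (its down-lengths), and the total size of all these subsets is $\le N\beta$}; the number of such configurations is $\prod_i \binom{M_N}{k_i}$ with $\sum k_i \le N\beta$, and $\log \prod_i \binom{M_N}{k_i} \le \sum_i k_i \log M_N \le N\beta\log M_N = o(N\log N)$ by \eqref{mK}-type bounds (or just $\binom{M_N}{k_i}\le M_N^{k_i}$). \textbf{This is the key point} --- we pay $\log M_N$, not $\log N$, per short edge because the endpoint $i$ indexes the factor rather than being chosen. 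Hence $\ent(\GG_N\mid B_N) \le \log G[N,N\delta] + N\beta\log M_N + O(\log N) = (\delta + o(1))N\log N$, and letting $\delta \downarrow 0$ after the $\limsup$ gives $c = 0$. I expect the main obstacle to be exactly this bookkeeping: making sure the short edges are charged $\log M_N$ rather than $\log N$ by organizing the count vertex-by-vertex, and verifying the bad-event contribution is negligible using the quantitative rate $o(N^{-1}\log N)$ in (i) against the trivial bound $\ent(\GG_N\mid B_N^c) = O(N^2)$.
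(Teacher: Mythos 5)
Your core counting argument is correct and is essentially the paper's own: the paper likewise splits the edge set into short edges (at most $\beta N$ edges drawn from a pool of at most $N M_N$ candidates, contributing $\log \binom{N M_N}{\beta N} = O(N \log M_N) = o(N\log N)$) and long edges (at most $\delta N$ of them, contributing $\log G[N,\delta N] \sim \delta N \log N$), then lets $\delta \downarrow 0$. Your vertex-by-vertex bookkeeping $\prod_i \binom{M_N}{k_i}$ with $\sum_i k_i \le \beta N$ is the same bound organized slightly differently, and your identification of ``pay $\log M_N$, not $\log N$, per short edge'' is exactly the right key point.

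However, there is a genuine error in your handling of the bad event. You set $B_N = \{E_N \le N\beta\} \cap \{X_N \le N\delta\}$ and claim the contribution $\Pr(B_N^c)\,\ent(\GG_N \mid B_N^c)$ is $o(N^{-1}\log N)\cdot O(N^2) = o(N\log N)$. But $\Pr(B_N^c)$ is not $o(N^{-1}\log N)$: hypothesis (ii) gives only $\Pr(X_N > N\delta) = o(1)$, so $\Pr(B_N^c)$ is merely $o(1)$, and against the trivial bound $\ent(\GG_N\mid B_N^c) = O(N^2)$ the product is only $o(N^2)$, which does not suffice. The repair is to split the bad event in two, and this is precisely the two-stage structure of the paper's proof: on $\{E_N > N\beta\}$ the probability is $o(N^{-1}\log N)$ and one may use the trivial $\binom{N}{2}\log 2 = O(N^2)$ entropy bound; on $\{E_N \le N\beta,\ X_N > N\delta\}$ the probability is only $o(1)$, but there the graph still has at most $\beta N$ edges, so the conditional entropy is at most $\log G[N,\beta N] = O(N\log N)$ and the product is again $o(N\log N)$. (The paper implements this by first proving the lemma under the stronger hypothesis $\Pr(E_N>N\beta)=0$, where the slice $\{X_N>\delta N\}$ can be charged $\log G[N,\beta N]$, and then conditioning on $\{E_N\le \beta N\}$ to reduce the general case to that one.) With that bookkeeping corrected, your argument goes through.
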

\smallskip \noindent
{\bf Remark.} The lemma applies to the $\gamma>2$ case of the ``small worlds" model 
in Section~\ref{sec:SW}.  Take the ordering  induced by the
natural distance between vertices.  In this case, $E_N$ is a sum of independent indicators
with $\Ex E_N \sim c N$ for some constant $c$. Standard concentration results (e.g.~\cite{MR2248695} Theorem 2.15)
imply (i) for any $\beta>c$, and (ii) follows since for any sequence
$M_N\to\infty$ we have $\Ex X_N= \textrm{O}(N M_N^{1-\gamma/2})=\textrm{o}(N)$.

\smallskip  \noindent \proof
We first show that the result holds with (i) replaced by \\
(i$'$)  $\Pr(E_N>N\beta)=0$,\\
and then use this modified statement to prove the lemma.

Assume now that $\GG_N$ satisfies (i$'$) and (ii) and write
$\GG_N$, considered as an edge-set, as a disjoint union 
$\GG_N^\prime \cup \GG_N^{\prime \prime}$, 
where $\GG_N^\prime$ consists of the edges of length $\le M_N$.  
Because $\GG_N^\prime$ contains at most $\beta N$ edges out of a set of 
at most $NM_N$ edges, 
\begin{eqnarray*}
 \ent (\GG_N^\prime) &\le& \log(\beta N) + \log {NM_N \choose \beta N} \\
&=& o(N \log N) \mbox{ by } (\ref{mK}).
\end{eqnarray*}
Now fix $\delta > 0$ and condition on  whether the number $X_N$ of edges of $\GG_N^{\prime \prime}$ is bigger or smaller than $\delta N$.  
Using (\ref{H-mixing}) we get 
\[
 \ent(\GG_N^{\prime \prime}) 
\le \log 2 + \log G[N,\delta N] + \Pr(X_N > \delta N) \log G[N,\beta N]  . \]
Now $\Pr(X_N > \delta N) \to 0$ by assumption (ii), and then using (\ref{GMN}) we get
\[  \ent(\GG_N^{\prime \prime})  \le (\delta + o(1)) N \log N. \]
Because $\delta > 0 $ is arbitrary we conclude 
\[ \ent(\GG_N) \le \ent (\GG_N^\prime)  +  \ent(\GG_N^{\prime \prime}) 
= o(N \log N) . \]

Now assume that $\GG_N$ satisfies the weaker hypotheses (i) and (ii). 
Defining $\widehat{\GG_N}$ to have the conditional distribution of 
$\GG_N $ given $E_N\leq \beta N$,
it is clear that 
$\widehat{\GG_N}$ satisfies (i$'$).
We will show that it also satisfies (ii), implying (by the previous result)  that the entropy rate of $\widehat{\GG}_N$ is zero.  
Let $\delta>0$.  Conditioning on  the event ($A_N$, say) that $E_N\leq \beta$,
\baln{
\Pr(X_N>\delta N) = \Pr( X_N>\delta N| A_N)\Pr(A_N) + \Pr(X_N>\delta N|A_N^c)\Pr(A_N^c). \label{t5}
}
By (ii), the term on the left hand side of \eqref{t5} is $\textrm{o}(1)$, and 
by (i), $\Pr(A_N^c)=\textrm{o}(1)$, and so also $\Pr(A_N) \to 1$.  Thus,
$\Pr( X_N>\delta N| A_N)$ must be $\textrm{o}(1)$, as desired.

To complete the proof, use \eqref{H-mixing} to write
\bal{
\ent(\GG_N)&\leq \EE(\Pr(A_N))+ \ent(\GG_N|A_N)\Pr(A_N)+\ent(\GG_N|A_N^c)\Pr(A_N^c) \\
	&\leq \log 2+\ent(\widehat{\GG}_N)+ \Pr(A_N^c) \binom{N}{2}\log 2 .
}
The entropy rate of $\widehat{\GG}_N$ is zero, and assumption (i) is
exactly that $\Pr(A_N^c)=\textrm{o}(N^{-1} \log N)$, so $\ent(\GG_N) = 
\textrm{o}(N\log N)$, as desired.

\begin{Lemma}
\label{L2}
Take a deterministic graph on $N$ unlabelled vertices, and let $c_N$ 
denote the number of components and $e_N$ the number of edges.  Construct $\GG_N$ by assigning random distinct vertex-names $\ba(v)$
of length $\textrm{O}(\log N)$ to vertices $v$,  their distribution being arbitrary subject to 
\[ \sum_{\textrm{edges } (\ba, \ba^\prime)} d_H(\ba, \ba^\prime) = \textrm{o}(N \log N) 
\mbox{ in probability}. \]
If $e_N = O(N)$ and $c_N = o(N)$ then $\GG_N$ has 
entropy rate zero.
\end{Lemma}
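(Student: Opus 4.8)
The plan is to decompose $\ent(\GG_N)$ using \eqref{H-condit} with respect to the unordered set $\bL$ of vertex-names, exactly as in the Hamming-distance model. Since the underlying graph is deterministic, $\GG_N$ is determined by the pair (unordered name-set $\bL$, assignment of names to the fixed unlabelled graph), so $\ent(\GG_N) = \ent(\bL) + \Ex\,\ent(\GG_N \mid \bL)$. The first term is at most $\log\binom{A^{L_N}}{N}$ which, since $L_N = \textrm{O}(\log N)$, is $\textrm{O}(N\log N)$ — but that is not good enough on its own. The real point is that conditional on $\bL$, specifying $\GG_N$ amounts to choosing which of the $N!$ bijections between the name-set and the vertex-set was used, modulo graph automorphisms; so $\ent(\GG_N \mid \bL) \le \log N! \sim N\log N$, again only $\textrm{O}(N\log N)$. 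So a naive bound fails and we must genuinely exploit the near-isometry hypothesis $\sum_{\text{edges}} d_H(\ba,\ba') = \textrm{o}(N\log N)$ in probability together with $c_N = \textrm{o}(N)$.

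The key idea: encode the name-assignment cheaply by building up the graph component-by-component and, within each component, using a spanning tree. Fix one spanning tree $T_i$ in each of the $c_N$ components; these trees use $N - c_N$ edges total. To reconstruct the assignment it suffices to specify, for each component, the name of one ``root'' vertex, and then for each tree-edge $(v,w)$ traversed away from the root, the name $\ba(w)$ given $\ba(v)$. Specifying $\ba(w)$ given $\ba(v)$ costs at most $\log\binom{L_N}{d}(A-1)^d \le d\log(A L_N)$ bits when $d_H(\ba(v),\ba(w)) = d$, since we name the $d$ positions that differ and the new letters there. Summing over the tree edges, the total conditional entropy is at most
\begin{equation}
\ent(\GG_N \mid \bL) \le c_N \log(A^{L_N}) + \Ex\!\left[ \sum_{(v,w)\in \bigcup_i T_i} d_H(\ba(v),\ba(w)) \right] \log(A L_N) + (\text{overhead}),
\label{L2plan}
\end{equation}
where the overhead accounts for the event that $\sum d_H$ is atypically large (handled by splitting off an event of vanishing probability and bounding its contribution by $\binom N2 \log 2$ times that probability, as in Lemma~\ref{L1}), and for encoding the $e_N - (N-c_N) = \textrm{O}(N)$ non-tree edges, which only need to be named as unordered pairs of already-known names, costing $\textrm{O}(N)$ further bits — negligible. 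Since $\bigcup_i T_i$ is a subgraph of the given graph, $\sum_{(v,w)\in\bigcup T_i} d_H(\ba(v),\ba(w)) \le \sum_{\text{all edges}} d_H(\ba(v),\ba(w)) = \textrm{o}(N\log N)$ in probability, and $L_N = \textrm{O}(\log N)$ gives $\log(AL_N) = \textrm{O}(\log\log N)$, so that term is $\textrm{o}(N\log N)\cdot\textrm{O}(\log\log N)$ — wait, that is too big; so instead I bound $d_H(\ba(v),\ba(w))\log(AL_N) \le d_H \cdot \textrm{O}(\log\log N)$ is wrong, and one should rather observe $\log\binom{L_N}{d} \le L_N \log 2$ is also too crude. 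The clean route is: naming $d$ positions out of $L_N$ and their letters costs at most $d\log L_N + d\log A + \textrm{O}(\log L_N) = \textrm{O}(d\log\log N)$; summed, this is $\textrm{O}(\log\log N)\cdot\textrm{o}(N\log N/\log\log N)$ — so we actually need the hypothesis in the sharper form that $\sum d_H = \textrm{o}(N\log N/\log\log N)$, OR we simply note $\textrm{o}(N\log N) \cdot \log\log N$ divided by $N\log N$ need not vanish. I will therefore encode position-sets more efficiently: the first term $c_N\log A^{L_N} = c_N L_N \log A = \textrm{o}(N)\cdot\textrm{O}(\log N) = \textrm{o}(N\log N)$ is fine; for the edges I use that specifying $\ba(w)$ from $\ba(v)$ costs at most $\ent$ of a string differing in $d_H$ places, and I bound the total by $\log$ of the number of assignments consistent with a given total Hamming weight $D := \sum_{\text{tree edges}} d_H$, which is at most $\binom{N L_N}{D} A^D \le \textrm{O}(D\log(NL_N))$; with $D = \textrm{o}(N\log N)$ and $\log(NL_N) = \textrm{O}(\log N)$ this is $\textrm{o}(N(\log N)^2)$ — still not obviously $\textrm{o}(N\log N)$.

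The main obstacle, then, is precisely this bookkeeping: converting the hypothesis $\sum d_H = \textrm{o}(N\log N)$ in probability into an $\textrm{o}(N\log N)$ bound on conditional entropy without losing a $\log$ factor. The resolution I would pursue: rather than encoding each tree-edge's name-difference independently, encode the \emph{whole} assignment restricted to each component at once. Within a component of size $n_i$, once the root name is fixed, the set of admissible assignments is contained in the set of injections into $A$-ary strings whose \emph{total} tree-Hamming-weight is some value $D_i$; the number of such is at most (number of ways to distribute $D_i$ ``changes'' over $n_i - 1$ edges) $\times A^{D_i}$, i.e.\ $\le \binom{D_i + n_i}{n_i} A^{D_i} \le 2^{D_i + n_i} A^{D_i}$, contributing entropy $\textrm{O}(D_i + n_i)$. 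Summing over components: $\textrm{O}\!\big(\sum_i D_i + \sum_i n_i\big) = \textrm{O}(D + N)$ where $D = \textrm{o}(N\log N)$ in probability — hence $\textrm{o}(N\log N)$, after the usual truncation to control the low-probability event where $D$ is large (contribution $\le \Pr(D \text{ large})\cdot\binom N2\log 2 = \textrm{o}(N\log N)$ provided the probability decays, which the ``in probability'' hypothesis gives once we also condition, exactly mirroring the two-step argument in the proof of Lemma~\ref{L1}). Combined with $\ent(\bL) + c_N L_N\log A = \textrm{O}(N\log N)$ — no wait, $\ent(\bL) = \textrm{O}(N\log N)$ is NOT $\textrm{o}$; so the root-naming and $\ent(\bL)$ together are the dominant $\textrm{O}(N\log N)$ terms and one must check they cancel. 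The correct decomposition is the reverse: condition on the \emph{ordered} name-sequence is $\textrm{O}(N\log N)$, so instead write $\ent(\GG_N) \le \ent(\text{assignment}) = \ent(\bL) + \ent(\text{bijection}\mid\bL)$ and realize the bijection is what we bound above by $\textrm{o}(N\log N)$ — but then $\ent(\bL) = \textrm{O}(N\log N)$ survives and the rate is \emph{not} zero. So the genuinely correct move is to bound $\ent(\GG_N)$ directly by $\log(\text{number of possible }\GG_N)$ without separating out $\bL$: the number of graphs-with-names realizable is (number of name-sets)$\times$(number of bijections giving distinct graphs), and we must show the \emph{product}, i.e.\ essentially $\log\binom{A^{L_N}}{N} + \textrm{o}(N\log N)$, is $\textrm{o}(N\log N)$ — which forces $\log\binom{A^{L_N}}{N} = \textrm{o}(N\log N)$, i.e.\ $L_N = (1+o(1))\log_A N$. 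That is not assumed. I suspect the intended reading is that the name-set is also given/deterministic or that ``$\textrm{O}(\log N)$'' is meant as ``$(1+o(1))\log_A N$''; under that reading the argument above closes cleanly, and the one real obstacle is the truncation step converting ``in probability'' to a usable tail bound, handled verbatim as in Lemma~\ref{L1}. I would present the proof under that reading, flag the assumption explicitly, and carry out \eqref{L2plan} with the component-wise spanning-tree encoding as the quantitative core.
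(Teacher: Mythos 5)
Your encoding --- one spanning tree per component, a root name per component, then the differing coordinates and letters along each tree edge directed away from the root --- is exactly the paper's, but you fail to close the argument at the two places where you yourself sense trouble. First, the cost of recording \emph{which} coordinates differ. You correctly observe that pricing each differing coordinate at $\log L_N = \textrm{O}(\log\log N)$ gives only $\textrm{o}(N \log N)\cdot \textrm{O}(\log\log N)$, which is not enough; but both of your attempted fixes fail. The bound $\log \binom{N L_N}{D} \le D \log (N L_N)$ throws away the essential factor (the correct asymptotic from \eqref{mK} is $D \log \frac{N L_N}{D}$), and your component-wise count $\binom{D_i+n_i}{n_i} A^{D_i}$ is simply not an upper bound on the number of admissible assignments: it records how many coordinates change on each tree edge and what the new letters are, but not \emph{which} of the $L_N$ positions change. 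The paper's fix is the bound you wrote down and then discarded: after truncating to a deterministic bound $s_N=\textrm{o}(N\log N)$, let $\SS$ be the set of all differing coordinates pooled over all tree edges, a subset of size at most $s_N$ of a universe of size at most $\beta N \log N$; then $\ent(\SS) \le \log \bigl( \sum_{i \le s_N} \binom{\beta N \log N}{i} \bigr) \sim s_N \log \frac{\beta N \log N}{s_N}$, and writing $s_N = \eps_N N \log N$ this equals $N \log N \cdot \eps_N \log(\beta/\eps_N) = \textrm{o}(N \log N)$ because $\eps \log (1/\eps) \to 0$. Conditional on $\SS$, the remaining information (root names and new letters) costs at most $(s_N + c_N \beta \log N)\log A = \textrm{o}(N \log N)$, and \eqref{H-mixing} assembles the pieces.

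Second, and more seriously, your final conclusion --- that $\ent(\bL) = \textrm{O}(N \log N)$ ``survives'' so the lemma needs an extra hypothesis such as $L_N = (1+o(1))\log_A N$ or a deterministic name-set --- is wrong. The spanning-tree encoding determines the \emph{entire} object $\GG_N$, name-set included: the root names together with $\SS$ and the new letters reconstruct every $\ba(v)$. So there is no separate $\ent(\bL)$ term to add, and the decomposition $\ent(\GG_N) = \ent(\bL) + \Ex\, \ent(\GG_N \mid \bL)$ with the two terms bounded separately is the wrong starting point. The hypotheses $c_N = \textrm{o}(N)$ and $\sum_{\textrm{edges}} d_H = \textrm{o}(N \log N)$ jointly \emph{force} $\ent(\bL) = \textrm{o}(N \log N)$: a near-uniform name-set would make adjacent names differ in $\Theta(\log N)$ coordinates, which with $e_N \asymp N$ edges and few components violates the Hamming hypothesis. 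The lemma is correct as stated. Your truncation step (reducing ``in probability'' to a deterministic bound, with the bad event contributing $\textrm{o}(1) \cdot \textrm{O}(N \log N)$) is fine and matches the paper.
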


\smallskip  \noindent \proof 
By a straightforward truncation argument 
we may assume there is a deterministic bound 
\[ \sum_{\textrm{edges } (\ba, \ba^\prime)} d_H(\ba, \ba^\prime) 
\le s_N = \textrm{o}(N \log N) .\] 
The name-lengths are $\le \beta \log N$ for some $\beta$.
Consider first the case where there is a single component. 
Take an arbitrary spanning tree with arbitrary root, and write the edges of the tree in breadth-first order as $e_1,\ldots, e_{N-1}$.  
We can specify $\GG_N$ by specifying first
the name of the root; then 
for each edge $e_i = (v,v^\prime)$ directed away from the root, 
specify the coordinates where $\ba(v^\prime)$ differs from $\ba(v)$ 
and specify the values of $\ba(v^\prime)$ at those coordinates.
Write $\SS$ for the random set of  all these differing coordinates.
Conditional on $\SS = S$ the entropy of $\GG_N$ is at most
$(|S| + \beta \log N ) \log A$, where the $\beta \log N$ term 
arises from the root name.
So using  \eqref{H-mixing} 
\[ \ent(\GG_N) \le \ent(\SS) + (s_N + \beta \log N) \log A . \]
With $c_N$ components the same argument shows 
\[ \ent(\GG_N) \le \ent(\SS) + (s_N + c_N \beta \log N) \log A . \]
The second term is $\textrm{o}(N \log N)$ by assumption, and 
\[ \ent(\SS) \le \log \left (\sum_{i \le s_N} 
\binom{\beta N \log N}{i} \ \right) = \textrm{o}(N \log N) , \]
the final relation by e.g.\ the $p = 1/2$ case of (\ref{BinLD}).

\subsection{Summary}
The reader will recognize the models in this section as standard random graph models, 
adapted to our setting in one of several ways.
One can take a model of dynamic growth, adding one vertex at a time, and then assign the $k$'th vertex a name, e.g.\ the 
``default binary" or ``random A-ary" used in the \ER\ models. 
Alternatively, as in the ``Hamming distance" model, one can
start with $N$ vertices with assigned names and then add edges according to some probabilistic rule involving the names of end-vertices.
Roughly speaking, for any existing random graph model where one can calculate anything, one can calculate the entropy rate for such adapted models. 
But this is an activity perhaps best left for future Ph.D. theses.
We are more interested in models where the graph structure and the name structure each simultaneously influence the other, rather than starting by specifying one structure 
and having that influence the other.  
It is not so easy to devise tractable such models, but the next section
shows our attempt.

\section{A hybrid model}
\label{sec:hard}
In this section we study a model for which calculation of the entropy rate is less straightforward.  
It incidently reveals a connection between our setting and the more familiar setting of ``graph entropy".

\subsection{The model}
\label{sec:hybrid_model}
In outline, the graph structure is again sparse \ER\  
$\GG(N,\alpha/N)$, but we construct it inductively over vertices, and make the vertex-names copy parts of the names of previous vertices that the current vertex is linked to.  
Here are the details.

\smallskip
\noindent
{\bf Model: \ER\ with hybrid names.} Take $L_N \sim \beta \log_A N$ for $1<\beta<\infty$.  
Vertex $1$ is given a uniform random length-$L_N$ $A$-ary name.
For $1 \le n \le N-1$:
\begin{quote}
vertex $n+1$ is given an edge to each vertex $ i \le n$ independently with probability $\alpha/N$.  Write $Q_n \ge 0$ for the number of such edges, 
and $\ba^1, \ldots, \ba^{Q_n}$ for the names of the linked vertices. 
Take an independent uniform random length-$L_N$ $A$-ary string $\ba^0$.
Assign to vertex $n+1$ the name obtained by, independently for each 
coordinate $1 \le u \le L_N$, making a uniform random choice from the 
$Q_n+1$ letters 
$a^0_u, a^1_u, \ldots,a^{Q_n}_u$.
\end{quote}
See Figure 1. 
This model gives a family $(\GG_N)$ parametrized by $(A,\beta,\alpha)$. 
Note that this scheme for defining ``hybrid" names could be used
with any sequential construction of a random graph, for instance 
preferential attachment models.

\setlength{\unitlength}{0.33in}
\begin{picture}(10,4)(-3,0)
\put(0,1){\circle*{0.2}}
\put(2,3){\circle*{0.2}}
\put(5,2){\circle*{0.2}}
\put(-0.6,1.26){d\underline{af}cbb}
\put(1.4,3.26){bfc\underline{ca}d} 
\put(4.6,2.26){bafcac}
\put(0,1){\vector(5,1){4.7}}
\put(2,3){\vector(3,-1){2.7}}
\end{picture}

{\bf Figure 1.} 
{\small Schematic for the hybrid model.  
A vertex (right) arrives with some ``original name" 
$\underline{b}bdab\underline{c}$ and is attached to two previous vertices with names 
$dafcbb$ and $bfccad$.  
The name given to the new vertex is obtained by copying for each 
position the letter in that position in a uniform random choice from the three names.  Choosing the underlined letters gives the name shown in the figure.}

\subsection{The ordered case}

This model illustrates a distinction mentioned in
Section~\ref{sec:SpERb}.  
In the construction above, the  $n$th vertex is assigned a name, say $\ba^n$, during the construction, but in the final graph $\GG_N$ we do not see the value of $n$ for a vertex.
The ``ordered" model $(\GG^{ord}_N)$ in which we do see the value of $n$ for each vertex, by making the name be 
$(n,\ba^n)$, is a different model whose analysis is conceptually more straightforward, so we will start with that model.  
We return to the unordered model in section \ref{sec-unordered}.

\smallskip \noindent
{\bf Entropy rate formula} for $(\GG^{ord}_N)$.
\begin{equation}
\frac{\alpha}{2} +  \beta \sum_{k \ge 0}  \frac{\alpha^k J_k(\alpha) h_A(k) }{k! \log A } 
\label{hybrid-rate}
\end{equation}
where 
\[ J_k(\alpha) := \int_0^1 x^k e^{-\alpha x} dx \]
and the constants $h_A(k)$ are defined at (\ref{hA-def}).

Write $\GG_{N,n}$ for the partial graph obtained after vertex $n$ has been assigned its edges to previous vertices and then its name.
We will show  that, for deterministic $e_{N,n}$ defined at (\ref{eNn}) below, 
as $N \to \infty$  
the entropies of the conditional distributions satisfy
\begin{equation}
\max_{1 \le n \le N-1}  
\Ex \left| \ent(\GG_{N,n+1} \vert \GG_{N,n}) - e_{N,n} \right| 
= \textrm{o}(\log N) .
\label{eNn1}
\end{equation}
By the chain rule  (\ref{H-condit}) this immediately implies 
\[ \ent(\GG_N^{ord}) - \sum_{n=1}^{N-1} e_{N,n} = \textrm{o}(N \log N) \]
which will establish the entropy rate formula.

The key ingredient is the following technical lemma; 
note that the measures $\mu^{\textbf{i}}$ below depend on the 
realization of $\GG^{ord}_N$ and are therefore random quantities.    
Write ``ave" for average, and write 
\[ ||\Theta||\ta{k} : = \sfrac{1}{2}  \sum_{\ba \in \bA^k} 
\left| \Theta(\ba) - A^{-k} \right|  \] 
for the variation distance between a probability distribution $\Theta$ 
on $\bA^k$ and the uniform distribution.
\begin{Lemma}
\label{L3a}
Write $(n,\ba^n), \ 1 \le n \le N$ for the vertex-names of $\GG^{ord}_N$.
For each $k\geq 1$ and $\textbf{i}:=(i_1,\ldots,i_k)$ with $1 \le i_1<\cdots < i_k \le N$,
write $\mu^{\textbf{i}}$ 
for the empirical distribution of 
$(a^{i_1}_u,\ldots, a^{i_k}_u), 1 \le u \le L_N$.  
That is, the probability distribution on $ \bA^k$ 
\[ \mu^{\textbf{i}}(x_1,\ldots, x_k) := L_N^{-1} \sum_{u=1}^{L_N} 
\Ind_{\left(a^{i_1}_u = x_1,\ldots, a^{i_k}_u = x_k\right)}. \]
Then
\baln{
\Delta\ta{k}_N := 
\max_{2 \le n \le N} 
\Ex ||\underset{1 \le i_1<\cdots < i_k \le n}{\ave} \ \mu^{\textbf{i}} ||\ta{k}\leq C \left(\frac{A^{k/2}}{\sqrt{\log N}}+\frac{k^2}{N}\right).\label{089}
}
for a constant $C$ not depending on $k, N$.
\end{Lemma}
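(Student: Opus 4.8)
\smallskip\noindent\textbf{Proof plan.} The plan is to realize the averaged empirical measure $\nu_n := \underset{1 \le i_1<\cdots < i_k \le n}{\ave}\mu^{\textbf{i}}$ as the conditional law, given $\GG^{ord}_N$, of the random letter-tuple $(a^{I_1}_U,\ldots,a^{I_k}_U)$, where $U$ is uniform on $\{1,\ldots,L_N\}$ and $\vec{I}=(I_1<\cdots<I_k)$ is a uniform random $k$-subset of $\{1,\ldots,n\}$, both drawn independently of the model; indeed this conditional law puts mass $\binom{n}{k}^{-1}L_N^{-1}\sum_{\textbf{i}}\sum_u \Ind(a^{i_1}_u=a_1,\ldots,a^{i_k}_u=a_k)$ at $\ba$, which is exactly $\nu_n(\ba)$. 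Write $G_n$ for the edge-set of $\GG^{ord}_N$ restricted to $\{1,\ldots,n\}$; this determines the distribution of the names $\ba^1,\ldots,\ba^n$, and the key structural fact is that conditionally on $G_n$ the name-generation acts \emph{independently across the $L_N$ coordinates}, and that a single coordinate $u$ admits a ``source'' description: following each vertex's chain of copied-from pointers back to where a fresh letter was drawn assigns to $v$ a source $S_u(v)$, with $a^v_u=\xi_u(S_u(v))$ for a family $(\xi_u(s))$ of i.i.d.\ uniform letters that is independent of $G_n$ and of the pointer structure. I would then split $\|\nu_n-\mathrm{Unif}\|^{(k)} \le \|\nu_n-\bar\nu_n\|^{(k)}+\|\bar\nu_n-\mathrm{Unif}\|^{(k)}$ with $\bar\nu_n:=\Ex[\nu_n\mid G_n]$, and handle the two terms separately.

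For the fluctuation term $\Ex\|\nu_n-\bar\nu_n\|^{(k)}$ I would apply Cauchy--Schwarz over the $A^k$ coordinates: $\|\nu_n-\bar\nu_n\|^{(k)}=\tfrac12\sum_\ba|\nu_n(\ba)-\bar\nu_n(\ba)|\le \tfrac12 A^{k/2}\big(\sum_\ba(\nu_n(\ba)-\bar\nu_n(\ba))^2\big)^{1/2}$, so by Jensen $\Ex\|\nu_n-\bar\nu_n\|^{(k)}\le \tfrac12 A^{k/2}\big(\sum_\ba\Ex\,\mathrm{Var}(\nu_n(\ba)\mid G_n)\big)^{1/2}$. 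Conditionally on $G_n$ the coordinate profiles $Y_u(\ba):=\underset{\vec I}{\ave}\Ind(a^{I_1}_u=a_1,\ldots,a^{I_k}_u=a_k)$ are i.i.d.\ in $[0,1]$ with $\nu_n(\ba)=L_N^{-1}\sum_u Y_u(\ba)$, so $\mathrm{Var}(\nu_n(\ba)\mid G_n)\le L_N^{-1}\Ex[Y_1(\ba)^2\mid G_n]\le L_N^{-1}\Ex[Y_1(\ba)\mid G_n]$; summing over $\ba$ and using $\sum_\ba Y_1(\ba)=1$ gives $\sum_\ba\mathrm{Var}(\nu_n(\ba)\mid G_n)\le L_N^{-1}$, whence $\Ex\|\nu_n-\bar\nu_n\|^{(k)}\le \tfrac12 A^{k/2}L_N^{-1/2}=O(A^{k/2}/\sqrt{\log N})$ since $L_N\sim(\beta/\log A)\log N$.

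For the bias term I would write $\bar\nu_n$ as the $\vec I$-mixture of the conditional laws $\mathcal L(a^{I_1}_1,\ldots,a^{I_k}_1\mid G_n,\vec I)$, so that $\|\bar\nu_n-\mathrm{Unif}\|^{(k)}\le \underset{\vec I}{\ave}\,\|\mathcal L(a^{I_1}_1,\ldots,a^{I_k}_1\mid G_n,\vec I)-\mathrm{Unif}\|^{(k)}$ in expectation. Conditioning additionally on the source-map $S_1(\cdot)$: on the event that $S_1(I_1),\ldots,S_1(I_k)$ are pairwise distinct, the tuple $(\xi_1(S_1(I_1)),\ldots,\xi_1(S_1(I_k)))$ consists of $k$ distinct entries of the i.i.d.\ family $\xi_1$ and is therefore \emph{exactly} uniform on $\bA^k$; hence each summand is at most $\Pr(S_1(I_1),\ldots,S_1(I_k)\text{ not all distinct}\mid G_n,\vec I)$, and after taking expectations $\Ex\|\bar\nu_n-\mathrm{Unif}\|^{(k)}\le \Pr(\text{two of }I_1,\ldots,I_k\text{ share a source at coordinate }1)$. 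A union bound over the $\binom k2$ pairs reduces this to showing that two uniformly random vertices of $\{1,\ldots,n\}$ share a source at a fixed coordinate with probability $O(1/N)$, uniformly in $n\le N$, which produces the $O(k^2/N)$ term. Combining the two bounds, taking the maximum over $n$, and noting that total variation is always $\le 1$ (so the estimate is trivial whenever either term on the right exceeds $1$) yields the claim, with all constants depending only on $A,\alpha,\beta$.

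The hard part is the last estimate. I would study the ``copy-path'' from a vertex $v$ at a fixed coordinate --- the strictly decreasing chain $v=p_0>p_1>\cdots$ in which $p_{t+1}$ is the earlier neighbour $p_t$ copied from, ending when a fresh letter is drawn --- observing that $v,w$ share a source exactly when their copy-paths intersect. Two ingredients control this: (a) conditionally on the path having reached any given vertex, it continues only with probability $\Ex[Q/(Q+1)]$ for $Q\sim\mathrm{Binomial}(m,\alpha/N)$ with $m\le N-1$, and since $\Ex[1/(Q+1)]\ge\Pr(Q=0)=(1-\alpha/N)^m$ is bounded below uniformly, this probability is $\le c_\alpha<1$ uniformly, so $|\mathrm{path}(v)|$ is stochastically dominated by a geometric variable and $\Ex|\mathrm{path}(v)|\le C_\alpha$; and (b) each step lands on a fixed vertex $y$ with probability $\le\alpha/(2N)$ (given $\{y,p_t\}$ is an edge, the copy choice is uniform over $\ge 2$ candidates), so $\Pr(y\in\mathrm{path}(v))\le \tfrac{\alpha}{2N}\Ex|\mathrm{path}(v)|=O(1/N)$ and in fact $\Ex[|\mathrm{path}(v)|-1]\le \tfrac{(v-1)\alpha C_\alpha}{2N}$. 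With a ``topmost common vertex'' decomposition --- which decouples the two paths above their first meeting vertex, so that the second path's relevant randomness is fresh --- the probability that the two paths meet at $v$, at $w$, or at some interior vertex $y<\min(v,w)$ is bounded by $O(1/N)$, $O(1/N)$, and $\sum_{y\le n}O(1/N^2)=O(1/N)$ respectively. The routine remaining work is to verify the source representation, the conditional independence across coordinates, and the uniform-in-$(k,n)$ book-keeping.
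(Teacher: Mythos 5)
Your proposal is correct and follows essentially the same route as the paper's proof: a reduction to the averaged indicator sums, a union bound over the $\binom{k}{2}$ pairs of vertices whose coordinate-$1$ ``sources'' might collide (giving the $k^2/N$ term), and a Binomial/variance estimate with Cauchy--Schwarz over $\bA^k$ on the complementary event where the names are exactly i.i.d.\ uniform (giving the $A^{k/2}/\sqrt{\log N}$ term). The only real divergence is in the pairwise collision bound: you track the realized copy-path directly, using geometric domination of its length and a topmost-common-vertex decoupling, whereas the paper (Lemmas \ref{L4} and \ref{LTn}) upper-bounds the collision event by $\{\TT_i\cap\TT_j\ne\emptyset\}$ for the full descendant sets and counts decreasing paths by a first-moment argument --- both yield the same $O(1/N)$ estimate by the same underlying largest-common-vertex/disjoint-randomness idea.
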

We defer the proof to Section \ref{sec:p3}.

Fix $N$ and $n$, and consider 
$\ent(\GG_{N,n+1} \vert \GG_{N,n})$, 
the entropy of the conditional distribution.   
Conditioning on the 
edges of vertex $n+1$ in $\GG_{N,n+1}$, and using the chain rule~\eqref{H-condit}, we find
\baln{
&\ent(\GG_{N,n+1} \vert \GG_{N,n})=n \EE(\alpha/N) \notag \\
& \quad +\hspace{-5mm}\mathop{\sum_{k=0,\ldots, n}}_{1\leq i_1<\cdots<i_k\leq n}\hspace{-5mm}\left(\frac{\alpha}{N}\right)^k\left(1-\frac{\alpha}{N}\right)^{n-k}
\ent(\ba^{n+1}\vert \GG_{N,n}, n+1\rightarrow \{i_1,\ldots,i_k\}), \label{155}
}
where $n+1\rightarrow\{i_1,\ldots,i_k\}$ denotes the event that vertex $n+1$ connects
to vertices $i_1,\ldots,i_k$ and no others.
The contribution to the entropy from the choice of edges is
$n \EE(\alpha/N)$, which as in previous models contributes 
(after summing over $n$) the first term 
$\alpha/2$ of the entropy rate formula, so in the following we need
consider only the contribution from names, that is the sum in~\eqref{155}.  
Consider the contribution to the sum~\eqref{155} from $k=2$,
that is on the event 
$\{ Q_n = 2\}$ that vertex 
$n+1$ links to exactly two previous vertices. 
Conditional on these being a particular pair $1 \le i < j \le n$, 
with names $\ba^i, \ba^j$, the contribution to entropy is exactly
\[ \ent(\ba^{n+1}\vert \GG_{N,n}, n+1\rightarrow \{i,j\})=L_N \sum_{(a,a^\prime) \in \bA \times \bA} 
g_2(a,a^\prime) \ \mu^{(i,j)}(a,a^\prime) \] 
where 
\begin{eqnarray*}
 g_2(a,a^\prime)  &=& \EE_A(\sfrac{A+1}{3A}, \sfrac{A+1}{3A}, 
\sfrac{1}{3A}, \sfrac{1}{3A}, \ldots \ldots \sfrac{1}{3A}) 
\mbox{ if } a^\prime \neq a \\
 &=& \EE_A(\sfrac{2A+1}{3A}, \sfrac{1}{3A},
\sfrac{1}{3A}, \sfrac{1}{3A}, \ldots \ldots \sfrac{1}{3A}) 
\mbox{ if } a^\prime = a 
\end{eqnarray*}
and where $\EE_A(\bp)$ is the entropy of a distribution 
$\bp = (p_1,\ldots,p_A)$. 
Now unconditioning on the pair $(i,j)$, the contribution to 
$\ent(\GG_{N,n+1} \vert \GG_{N,n})$ from the event $\{ Q_n = 2\}$; that is
the $k=2$ term of the sum~\eqref{155}; equals
\[ L_N \sum_{1 \le i < j \le n} \frac{\alpha^2}{N^2} 
\ \left(1 - \sfrac{\alpha}{N}\right)^{n-2}
\quad 
\sum_{(a,a^\prime) \in \bA \times \bA} 
g_2(a,a^\prime) \ \mu^{(i,j)}(a,a^\prime) \] 
\begin{equation}
 =  \frac{L_N \alpha^2 \binom{n}{2}}{N^2} 
\ \left(1 - \sfrac{\alpha}{N}\right)^{n-2}
\sum_{(a,a^\prime) \in \bA \times \bA} 
g_2(a,a^\prime) \ \underset{1 \le i < j \le n}{\ave} \ \mu^{(i,j)}(a,a^\prime) .
\label{LNa-sum}
\end{equation}
Lemma~\ref{L3a} now tells us that the sum in (\ref{LNa-sum}) differs from 
\[ h_A(2) := A^{-2} \sum_{(a,a^\prime) \in \bA \times \bA} 
g_2(a,a^\prime) \] 
by at most 
$2g_2^* \Delta_N\ta{2} $ 
where $g_2^*\leq\log A$ is the maximum possible value of $g_2(\cdot, \cdot)$ and $\Delta_N\ta{2} $ is as defined in Lemma~\ref{L3a}.
So to first order as $N \to \infty$, the  
quantity (\ref{LNa-sum}) is
\[ e_{N,n,2} := \beta \log_A N \times \alpha^2 h_A(2) \sfrac{\binom{n}{2}}{N^2} 
 \ \exp(-\alpha n/N),\]
 with an error bounded by
 \bal{
 (2\log A) \, L_N\binom{n}{2}\left(\frac{\alpha}{N}\right)^2\left(1-\frac{\alpha}{N}\right)^{n-2}\Delta_N\ta{2}. 
 } 
A similar argument applies to
the terms in the sum~\eqref{155} for a general number $k$ of links.
In brief, we define
\[ e_{N,n,k} := \beta \log_A N \times \alpha^k h_A(k) \sfrac{\binom{n}{k}}{N^k} 
 \ \exp(-\alpha n/N) \] 
where
\begin{equation}
 h_A(k) := A^{-k} \sum_{(a_1,\ldots,a_k) \in \bA^k} 
\ent(\bp^{[a_1,\ldots,a_k]}),
\label{hA-def}
\end{equation}
and where $\bp^{[a_1,\ldots,a_k]}$ is the probability distribution $\bp$ on $\bA$ 
defined by
\[ p^{[a_1,\ldots,a_k]}(a) = \frac{1 + A\times |\{i: a_i = a\}|}{(1+k)A} . \] 
Also for $k = 0$ we set $h_A(0) = \log A$, the entropy of the uniform distribution on~$\bA$.   
Repeating the argument from the case $k=2$, 
we find that~\eqref{155} is, to first order, $\sum_{k\geq0} e_{N,n,k}$, with error of order
\baln{
L_N\sum_{k=0}^n \binom{n}{k}\left(\frac{\alpha}{N}\right)^k\left(1-\frac{\alpha}{N}\right)^{n-k}\Delta_N\ta{k}. \label{156}
}
Applying
Lemma~\ref{L3a} to bound $\Delta_N\ta{k}$
and then using simple properties of the binomial 
distribution yields that~\eqref{156} is $\textrm{o}(\log N)$. 

So we are now in the setting of~(\ref{eNn1}) with 
\begin{equation}
e_{N,n} = \sum_{k \ge 0} e_{N,n,k} .
\label{eNn}
\end{equation}  
Because 
\[ \sum_{n=1}^{N-1}  \sfrac{\binom{n}{k}}{N^k} 
 \ \exp(-\alpha n/N)
\sim \sfrac{N}{k!}  J_k(\alpha)\] 
calculating $\sum_{n=1}^{N-1} e_{N,n}$ gives the stated entropy rate formula.

\subsection{Proof of Lemma \ref{L3a}}
\label{sec:p3}
Fix $N$.   
Recall the construction of $\GG^{ord}_N$ involves an ``original name process" --  letters
of the name of vertex $n$ may be copies from previous names or may be from an ``original name", independent uniform 
for different $n$. 
Consider a single coordinate, w.l.o.g. coordinate $1$, of the 
vertex-names of $\GG^{ord}_N$.  
For each vertex $n$ this is either from the original name of $n$ or a copy of some 
previous vertex-name, so inductively the letter at vertex $n$ is a copy of the letter
originating at some vertex $1 \le C^N_1(n) \le n$;  
and similarly the letter at general coordinate $u$ is a copy from some vertex 
$C^N_u(n)$.
Because the copying process is independent of the name origination process, 
it is clear that the (unconditional) distribution of each name $\ba^n$ is
uniform on length-$L_N$ words.  
Moreover it is clear that, for $1 \le i < j \le N$, 
\begin{equation}\label{CNu}
\begin{split}
&\mbox{ the two names 
$\ba^i$ and $\ba^j$ are independent uniform} \\
&\hspace{1cm}\mbox{on the event }
\{C^N_u(i) \ne C^N_u(j) \ \forall u\}.
\end{split}
\end{equation}
The proof of Lemma~\ref{L3a} rests upon the 
following lemma, whose proof we defer
to the end of Section~\ref{sec:sparseER} . 
\begin{Lemma}
\label{L4}
For $(I,J)$ uniform on $\{1 \le i < j \le n\}$, write 
$\theta_{N,n} = \Pr (C^N_1(I) = C^N_1(J))$.  Then
\[ \max_{2 \le n \le N} \theta_{N,n} = \textrm{O}(1/N) 
\mbox{ as } N \to \infty .\]
\end{Lemma}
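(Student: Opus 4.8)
The plan is to track the ``letter-origin'' process $n \mapsto C^N_1(n)$ directly. Recall how coordinate~$1$ of vertex $n+1$ is formed: we look at $Q_n$, the number of back-edges of vertex $n+1$, take the names $\ba^1,\ldots,\ba^{Q_n}$ of the linked vertices plus an original string $\ba^0$, and pick one of the $Q_n+1$ candidates uniformly at random, independently per coordinate. So, writing $v_1,\ldots,v_{Q_n}$ for the vertices that $n+1$ links to, $C^N_1(n+1)$ equals $n+1$ with probability $1/(Q_n+1)$ (the original letter was chosen) and equals $C^N_1(v_r)$ with probability $1/(Q_n+1)$ for each $r$. Thus $C^N_1$ is a ``coalescent''-type process: two distinct vertices $i<j$ have $C^N_1(i)=C^N_1(j)$ only if, tracing each one backwards through its chosen parent at coordinate~$1$, the two trajectories ever land on the same vertex.

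First I would set up the backward chain. For a fixed vertex $m$, let $m = m_0 > m_1 > m_2 > \cdots$ be the sequence where $m_{t+1}$ is the vertex whose letter $m_t$ copied at coordinate~$1$ (the chain stops, i.e.\ is absorbed, when some $m_t$ used its original letter, meaning $C^N_1(m)=m_t$). For two vertices $i<j$, run the chain from $j$ first until it either gets absorbed or drops below $i$; then run both chains in parallel. The key observation is that at each step of a chain currently at vertex $m$, conditionally on $Q_{m-1}$ the next vertex is either ``absorb'' (prob.\ $1/(Q_{m-1}+1)$) or a uniform random one of the $Q_{m-1}$ back-neighbours of $m$, and each such back-neighbour is (marginally) roughly uniform on $\{1,\ldots,m-1\}$. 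So a step from $m$ lands at any particular vertex $\ell<m$ with probability $O(1/N)$: indeed $\Pr(\text{step from } m \text{ hits } \ell) = \Ex\big[\frac{Q_{m-1}}{Q_{m-1}+1}\cdot\frac{1}{Q_{m-1}}\,\Ind_{m\to\ell}\big] = \Pr(m\to\ell)\,\Ex[\ldots] \le \alpha/N$, using that $m\to\ell$ has probability $\alpha/N$ independently of everything relevant and $\frac{1}{Q_{m-1}+1}\le 1$ on $\{Q_{m-1}\ge1\}$.

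Next I would bound $\theta_{N,n}$. Fix $i<j\le n$. Condition on the backward chain from $j$; it visits some random set of vertices before being absorbed or passing below $i$. Crucially the expected number of vertices it visits in $\{1,\ldots,n\}$ is $O(1)$: at each live step the chain is absorbed with probability $\Ex[1/(Q+1)]$, and since $Q$ is Binomial$(m-1,\alpha/N)$ with mean $\to$ some $\lambda<\infty$ when $m=\Theta(N)$, this absorption probability is bounded below by a constant $\rho>0$ (more care is needed for small $m$, but there the chain has few vertices below it anyway, and one can absorb the small-$m$ contribution into the $O(1/N)$ bound since it concerns $O(1)$ many vertices). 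Similarly the backward chain from $i$ visits $O(1)$ vertices in expectation. Now $C^N_1(I)=C^N_1(J)$ forces the chain from $j$ and the chain from $i$ to meet; condition on the (finite in expectation) trajectory of the $j$-chain, then the $i$-chain must, at one of its $O(1)$ steps, hit one of the $O(1)$ vertices on the $j$-trajectory, and each such hit has probability $O(1/N)$ by the previous paragraph. Summing, $\Pr(C^N_1(i)=C^N_1(j)) = O(1/N)$ uniformly in $i<j\le n\le N$, and averaging over $(I,J)$ gives $\theta_{N,n}=O(1/N)$, uniformly in $n$.

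The main obstacle is making ``the backward chain visits $O(1)$ vertices in expectation'' rigorous uniformly over the starting vertex and over $N$, because the per-step absorption probability $\Ex[1/(Q_{m-1}+1)]$ degenerates when $m$ is small (when $m-1 < $ something, $Q_{m-1}$ can be $0$ with high probability, in which case the letter is necessarily original and the chain is immediately absorbed — so actually small $m$ helps absorption, but one must check the bookkeeping). The cleanest route is probably to show $\Ex[1/(Q_{m-1}+1)] \ge c$ for a universal $c>0$ for all $m\ge 2$ and all large $N$ (using $\Ex[1/(1+\mathrm{Bin}(r,p))] = \frac{1-(1-p)^{r+1}}{(r+1)p}$ explicitly, which is bounded below whenever $rp$ is bounded above, and $rp \le (N-1)\cdot\alpha/N < \alpha$), hence the number of live steps of each chain is stochastically dominated by a Geometric$(c)$ variable and has expectation $\le 1/c = O(1)$; then the meeting-probability argument above goes through with constants depending only on $\alpha$. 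One should also handle the mild dependence between the two chains and the edge-variables cleanly — but since distinct potential edges are independent and the copying randomness is independent across coordinates and vertices, conditioning on one chain's trajectory and then running the other is legitimate, and the $\alpha/N$ hitting bound survives the conditioning.
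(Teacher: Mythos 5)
Your argument is correct in outline, but it is genuinely different from the paper's. The paper never analyzes the backward copying chain at all: it observes that the origin $C_1^N(n)$ always lies in the set $\TT_n$ of ``descendants'' of $n$ (vertices reachable from $n$ by a decreasing path in the \ER\ graph), so $\{C_1^N(i)=C_1^N(j)\}\subseteq\{\TT_i\cap\TT_j\ne\emptyset\}$, and then bounds $\Pr(\TT_i\cap\TT_j\ne\emptyset)\le((\alpha e^\alpha)^2+\alpha e^\alpha)/N$ by a first-moment count of decreasing paths (the expected number of decreasing paths from $n$ to $j$ is at most $\alpha e^\alpha/N$, and the intersection event is decomposed over the largest common element using edge-disjoint path pairs, whose indicators are independent). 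Your route instead exploits the copying randomness to replace the whole descendant tree by a single absorbed backward path: the per-step hitting bound $\Ex[\Ind_{m\to\ell}/(Q_{m-1}+1)]\le\alpha/N$, the uniform lower bound on the absorption probability $\Ex[1/(1+Q)]\ge\Pr(Q=0)\ge e^{-2\alpha}$ giving geometric domination of the path length, and a meeting argument for the two chains. Both yield $O(1/N)$. The paper's version is shorter and reuses its Lemma on $\TT_n$ (which is needed anyway for the name-distinctness argument), while yours is more self-contained and probabilistically finer (a path rather than a tree), at the cost of the conditioning bookkeeping you flag: one must check, as you indicate, that the transition and absorption bounds survive conditioning on the other chain's trajectory and on ``no meeting yet,'' which works because the chains are strictly decreasing and distinct vertices have independent back-edge sets and copying choices. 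That bookkeeping is the only place where your sketch would need to be expanded to be fully rigorous; no step actually fails.
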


We first use this lemma to prove Lemma~\ref{L3a} in the case where $k=2$.
For $(I, J)$ as in Lemma~\ref{L4},
\begin{eqnarray}
\Delta_N\ta{2} &=&
 \sfrac{1}{2}\max_{2 \le n \le N} \Ex \sum_{\textbf{x}\in \textbf{A}^2}
 \left|
\Ex \big( L_N^{-1}\sum_{u=1}^{L_N} \Ind_{(a^{I}_u = x_1, a^{J}_u = x_2)} | \GG_{N,n} \big)
-A^{-2}  \right| \nonumber\\
&\leq& \sfrac{1}{2}\max_{2 \le n \le N} \sum_{\textbf{x}\in \textbf{A}^2}
\Ex \left|L_N^{-1}\sum_{u=1}^{L_N} \Ind_{(a^{I}_u = x_1, a^{J}_u = x_2)}-A^{-2}\right|. \label{166}
\end{eqnarray}
By Lemma~\ref{L4} and~\eqref{CNu}, the two names $\ba^I, \ba^J$ 
are independent uniform on $\bA^{L_N}$ outside an event 
of probability $\textrm{O}(1/N)$. Under this event, we bound
the total variation distance appearing in $\Delta_N\ta{2}$ by 1, leading to the
second summand in the bound~\eqref{089}. 
If the two names are independent, then 
because the sum below has Binomial$(L_N,A^{-2})$ distribution with variance $< L_NA^{-2}$,
\begin{equation}
 \Ex \left| 
L_N^{-1} \sum_{u=1}^{L_N} \Ind_{(a^{I}_u = x_1, a^{J}_u = x_2)} - A^{-2} \right| \leq L_N^{-1/2} A^{-1},
\label{LN1}
\end{equation}
which contributes the first summand in the bound~\eqref{089}.

The proof of Lemma~\ref{L3a} for general $k$ is similar.
 Taking $I_1,\ldots, I_k$ independent
and uniform on the set 
$\{1\leq i_1 <\cdots<i_k\leq n\}$, we have the analog of (\ref{166}):
\bal{
\Delta_N\ta{k}\leq \sfrac{1}{2}\max_{2 \le n \le N} \sum_{\textbf{x}\in \textbf{A}^k}
\Ex \left|L_N^{-1}\sum_{u=1}^{L_N} \Ind_{(a^{I_1}_u = x_1,\ldots, a^{I_k}_u = x_k)}-A^{-k}\right|. 
}
 The names $\ba^{I_1},\ldots, \ba^{I_k}$ are independent outside of the ``bad" event that
 some pair within $k$ random vertices 
have the same $C^N_1(\cdot)$ value. But the probability of this bad event
is bounded by $\binom{k}{2}$ times the chance for a given pair,
which, after applying Lemma~\ref{L4}, leads to the second summand of the bound~\eqref{089}.
And the upper bound for the term analogous to~\eqref{LN1}
becomes $L_N^{-1/2} A^{-k/2}$.
\qed

\subsection{Structure of the directed sparse \ER\ graph}
\label{sec:sparseER}
In order to prove Lemma~\ref{L4} and later results,
we study the original name variables $C_u^N(i)$ defined at~\eqref{CNu}.
It will first help to collect some facts about the structure of a directed sparse \ER\ random graph.
Write (omitting the dependence on~$N$)
\[ \TT_n = \{1 \le j \le n: \ \exists g \ge 0 \mbox{ and a path } 
n = v_0 > v_1 > \ldots > v_g = j \mbox{ in } \GG_N \} . \]
We visualize $\TT_n$ as the vertices of the tree of descendants of $n$
although it may not be a tree. The next result collects two facts about the 
structure of $\TT_n$ including that for large $N$ it is a tree with high probability.
\begin{Lemma}\label{LTn}
For $m<n$ and $\TT_n$ as above,\\
(a) $\Pr( \TT_n \cap \TT_m \ne \emptyset) \le \frac{(\alpha e^\alpha)^2 
+ \alpha e^\alpha}{N} $. \\
(b) $\Pr( \TT_n \mbox{ is not a tree }) \le \frac{ (\alpha e^\alpha)^3}{2N}$.
\end{Lemma}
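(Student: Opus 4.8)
The plan is to prove both parts by the first‑moment method: for each event I will exhibit a family of combinatorial witnesses built from decreasing paths of $\GG_N$, and bound the probability of the event by the sum over witnesses of the probabilities that they are present. The one elementary ingredient, used repeatedly, is that for $a>b$ the expected number of present decreasing paths $a=v_0>v_1>\cdots>v_g=b$ equals
\[
\sum_{g\ge1}\binom{a-b-1}{g-1}\Big(\frac{\alpha}{N}\Big)^{g}=\frac{\alpha}{N}\Big(1+\frac{\alpha}{N}\Big)^{a-b-1}\le\frac{\alpha e^{\alpha}}{N},
\]
using $(1+\alpha/N)^{a-b-1}\le(1+\alpha/N)^{N}<e^{\alpha}$; summing over the endpoint $b$ and adding the trivial path shows that the expected number of all decreasing paths out of $a$ is $(1+\alpha/N)^{a-1}\le e^{\alpha}$. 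In particular $\Pr(b\in\TT_a)\le\alpha e^{\alpha}/N$ whenever $b<a$.

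For part (a), fix $m<n$. If $\TT_n\cap\TT_m\ne\emptyset$ then either $m\in\TT_n$, an event of probability at most $\alpha e^{\alpha}/N$ by the display above, or $m\notin\TT_n$, and in that case I claim there exist a vertex $v<m$ and \emph{edge-disjoint} decreasing paths $n\to v$ and $m\to v$. Indeed, choose any decreasing path from $n$ that meets $\TT_m$ and let $v$ be its first vertex in $\TT_m$; the initial segment up to $v$ meets $\TT_m$ only at $v$, whereas every decreasing path from $m$ to $v$ lies entirely inside $\TT_m$, so the two share only the vertex $v$ (and $v<m$, since $v=m$ would give $m\in\TT_n$). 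Because the two paths are edge-disjoint, the probability that a prescribed pair of path-shapes is simultaneously present factors as a product $(\alpha/N)^{\len}(\alpha/N)^{\len}$, so a union bound over $v$ and over path-shapes gives $\Pr(m\notin\TT_n,\ \TT_n\cap\TT_m\ne\emptyset)\le\sum_{v<m}(\alpha e^{\alpha}/N)^{2}\le(\alpha e^{\alpha})^{2}/N$. Adding the two contributions gives the bound in (a).

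For part (b), I will use that $\TT_n$ fails to be a tree exactly when some vertex is reached from $n$ by two distinct decreasing paths. Taking the maximal common prefix of two such paths, and then the first vertex at which they reunite after they first diverge, produces a vertex $p\le n$ and a vertex $q<p$, a decreasing ``stem'' path $n\to p$, and two \emph{distinct} decreasing ``bubble'' paths $p\to q$ meeting only at $p$ and $q$; the stem uses only vertices $\ge p$, so the three paths are pairwise edge-disjoint. A union bound over these witnesses, the joint probabilities factoring by edge-disjointness, then gives
\[
\Pr(\TT_n\text{ not a tree})\le\sum_{p\le n}\Big(\sum_{\text{stems }n\to p}(\alpha/N)^{\len}\Big)\sum_{q<p}\tfrac12\Big[\Big(\tfrac{\alpha}{N}(1+\tfrac{\alpha}{N})^{p-q-1}\Big)^{2}-\tfrac{\alpha^{2}}{N^{2}}\Big],
\]
where the $\tfrac12$ accounts for unordered bubble-pairs and the subtracted $\alpha^{2}/N^{2}$ reflects that two \emph{distinct} bubble paths cannot both be the single edge $p\to q$. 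Writing $j=p-q-1$, the bounds $(1+\alpha/N)^{2j}-1\le 2j\tfrac{\alpha}{N}(1+\alpha/N)^{2j-1}\le 2j\tfrac{\alpha}{N}e^{2\alpha}$ and $\sum_{j=0}^{p-1}j\le N^{2}/2$ show the inner $q$-sum is at most $\alpha^{3}e^{2\alpha}/(2N)$, and then $\sum_{p\le n}(\sum_{\text{stems }n\to p}(\alpha/N)^{\len})=(1+\alpha/N)^{n-1}\le e^{\alpha}$ gives $\Pr(\TT_n\text{ not a tree})\le\tfrac12 e^{\alpha}\cdot\alpha^{3}e^{2\alpha}/N=(\alpha e^{\alpha})^{3}/(2N)$.

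The geometric-series path counts are routine. The substantive steps I expect to be the real work are the two structural reductions — that $\TT_n\cap\TT_m\ne\emptyset$ together with $m\notin\TT_n$ forces an edge-disjoint pair of decreasing paths into a common vertex, and that a non-tree $\TT_n$ forces a ``stem-plus-bubble'' subconfiguration — and, to land exactly on the stated constants rather than merely on $\textrm{O}(1/N)$, the minor refinements of keeping the constraint $S_1\ne S_2$ (hence the subtracted diagonal term) and of summing the index $j$ instead of bounding it crudely by $N$.
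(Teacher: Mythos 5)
Your proof is correct and follows essentially the same route as the paper: the key estimate that the expected number of decreasing paths between two fixed vertices is at most $\alpha e^{\alpha}/N$, followed by a first-moment bound over edge-disjoint path witnesses (a pair of paths into a common vertex for (a), a stem plus two edge-disjoint paths between a branch point and a reunion point for (b)), with the factorization justified by edge-disjointness. The only differences are cosmetic bookkeeping — you locate the common vertex as a first-hit point rather than as $\max \TT_n\cap\TT_m$, and you extract the $N^2/2$ factor in (b) by summing $j=p-q-1$ rather than by counting pairs $(j_2,j_1)$ directly.
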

\proof
First note that
for $1 \le j < n \le N$ the mean number of decreasing paths from 
$n$ to $j$ of length $g \ge 1$ equals 
$\binom{n-j-1}{g-1}  (\alpha/N)^g$.  
Because $n-j-1 \le N$, this is bounded by 
$\frac{\alpha}{N} \ \frac{\alpha^{g-1}}{(g-1)!}$, 
and summing over $g$ gives
\begin{equation}
 \Pr(j \in \TT_n) \le
\Ex (\mbox{number of decreasing paths from $n$ to $j$}) \le 
\alpha e^{\alpha}/N .
\label{Edec}
\end{equation}
We break the event $\TT_n \cap \TT_m\ne \emptyset$
into a disjoint union according to the largest element in the intersection:
$\max \TT_n\cap \TT_m=j$ for $j=1,\ldots,m$.
Now note for $j\leq m-1$, we can write
\baln{
\Pr (\max \TT_n\cap \TT_m=j)\leq \Ex \sum_{x_n^j,y_m^j}
 \Ind_{(x_n^j \mbox{ is path in } \GG_N)}
\Ind_{(y_m^j  \mbox{ is path in } \GG_N)} ,
\label{11}
}
where the sum is over edge-disjoint decreasing paths $x_n^j$ from $n$ to $j$
and $y_m^j$ from $m$ to $j$. Since the paths are edge-disjoint,
the indicators appearing in the sum~\eqref{11} are independent and so we find
\bal{
\Pr  (\max \TT_n\cap \TT_m=j)&\leq \sum_{x_n^j,y_m^j}
\Pr(x_n^j \mbox{ is path in } \GG_N) 
\Pr(y_m^j  \mbox{ is path in } \GG_N) \\
	&\leq \sum_{x_n^j}\Pr(x_n^j \mbox{ is path in } \GG_N) \sum_{y_m^j}\Pr(y_m^j  \mbox{ is path in } \GG_N) \\
	&\leq (\alpha e^{\alpha}/N)^2;
}
where the sums
in the second line are over all paths from $n$ (respectively m) to $j$, and the final
inequality follows from~\eqref{Edec}.
Now part~(a) of the lemma follows by summing over $j<m$ and adding the
corresponding bound~\eqref{Edec} for the case $j=m$.

Part (b) is proved in a similar fashion.  
If $\TT_n$ is not a tree then for some $j_2 \in \TT_n$ and some 
$j_1 < j_2$ there are two edge-disjoint paths from $j_2$ to $j_1$. 
For a given pair $(j_2,j_1)$ the mean number of such path-pairs is bounded by
$\Pr (j_2 \in \TT_n) \times (\alpha e^{\alpha}/N)^2$. 
By (\ref{Edec}) this is bounded by $(\alpha e^{\alpha}/N)^3$, and summing over pairs $(j_2,j_1)$ gives the stated bound.
\qed

\smallskip\noindent
{\bf Remark.} Note that for part (a) of the lemma we could 
also appeal to the more sophisticated 
inequalities of~\cite{MR799280} concerning disjoint occurrence of events,
which would give the stronger bound $\Pr  (\max \TT_n\cap \TT_m=j)\leq\Pr(j \in \TT_m) \times \Pr(j \in \TT_n)$.

\paragraph{Proof of Lemma \ref{L4}.}
Lemma~\ref{L4} follows from Lemma~\ref{LTn}(a)
and the observation that 
$\{C_1^N(i)=C_1^N(j)\}\subseteq\{\TT_i\cap\TT_j\ne \emptyset\}$.

\subsection{Making the vertex labels distinct}
\label{sec:unord}
In the ordered model studied above, the vertex-names are $(n,\ba^n), 1 \le n \le N$.  In order to study the
unordered model described at the start of Section~\ref{sec:hard}, 
we first must address the fact that  
the vertex-names $\ba^n, 1 \le n \le N$ may not be distinct.

\begin{Lemma}
\label{L7}
Let $\GG_N$ be random graphs-with-vertex-names, where 
(following our standing assumptions)
the names have length $\log N/\log A\leq L_N=O(\log N)$, and suppose that
for some deterministic sequence $k_N=o(N)$, the number of vertices that have non-unique names in $\GG_N$, 
say $V_N$, satisfies
$\Pr (V_N\geq k_N)=o(1)$.
Let $\GG^*_N$ be a modification with unique names obtained by re-naming some or all of the non-uniquely-named vertices.  Then 
$| \ent(\GG^*_N) - \ent(\GG_N)| = o(N \log N)$.
\end{Lemma}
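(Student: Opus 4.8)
The plan is to bound the difference $|\ent(\GG^*_N) - \ent(\GG_N)|$ in each direction using the basic entropy facts from Section~\ref{sec:backg}, in particular the mixing/conditioning inequality~\eqref{H-mixing} together with the crude counting estimate~\eqref{GMN}. The key structural observation is that $\GG^*_N$ differs from $\GG_N$ only in the names of vertices in the ``bad set'' $B_N$ of non-uniquely-named vertices; on the complementary event $\{V_N < k_N\}$ this is a change to at most $k_N = o(N)$ names, each of length $O(\log N)$, so the information needed to pass between the two graphs is only $O(k_N \log N) = o(N\log N)$.

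First I would make the truncation precise. Let $A_N = \{V_N < k_N\}$, so $\Pr(A_N^c) = o(1)$ by hypothesis. Applying~\eqref{H-mixing} with the two-cell partition according to $A_N$,
\[
\ent(\GG^*_N) \le \log 2 + \ent(\GG^*_N \mid A_N)\Pr(A_N) + \ent(\GG^*_N \mid A_N^c)\Pr(A_N^c).
\]
The last term is bounded by $\ent(\GG^*_N \mid A_N^c) \le \log$ of the total number of graphs-with-names on $N$ vertices with names of length $L_N = O(\log N)$, which is at most $\binom{N}{2}\log 2 + N L_N \log A = O(N\log N)$; multiplied by $\Pr(A_N^c) = o(1)$ this is $o(N\log N)$. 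So it remains to compare $\ent(\GG^*_N \mid A_N)$ with $\ent(\GG_N \mid A_N)$, i.e.\ to work on the event that at most $k_N$ vertices get renamed.

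The heart of the argument is then: conditionally on $A_N$, one can reconstruct $\GG^*_N$ from the triple $\big(\GG_N,\, S_N,\, (\text{new names of vertices in }S_N)\big)$, where $S_N$ is the set of renamed vertices, $|S_N| \le k_N$; and symmetrically $\GG_N$ from $\GG^*_N$ together with the analogous data. Hence by the data-processing inequality ($\ent(h(X)) \le \ent(X)$) and subadditivity,
\[
\ent(\GG^*_N \mid A_N) \le \ent(\GG_N \mid A_N) + \ent(S_N \mid A_N) + \ent\big(\text{new names} \mid A_N\big),
\]
where $\ent(S_N \mid A_N) \le \log\!\big(\sum_{j \le k_N}\binom{N}{j}\big) = o(N\log N)$ by~\eqref{mK} (since $k_N = o(N)$), and the entropy of the at most $k_N$ new names is at most $k_N L_N \log A = o(N\log N)$. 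Combining with the truncation bound gives $\ent(\GG^*_N) \le \ent(\GG_N) + o(N\log N)$; the reverse inequality is identical by symmetry, since re-naming is a reversible bookkeeping operation once the identities of the affected vertices and their new names are recorded. One subtlety to handle carefully is that $S_N$ need not be determined by $\GG^*_N$ alone (we chose which bad vertices to rename), but it is determined by the pair $(\GG_N,\GG^*_N)$, which is all we need for the data-processing step; alternatively one records $S_N$ as an explicit auxiliary variable on both sides. I expect this reconstruction/bookkeeping step — making sure the "decoding'' map is genuinely a function and that all auxiliary data is accounted for with $o(N\log N)$ entropy — to be the only real obstacle; the counting estimates themselves are routine given~\eqref{mK} and~\eqref{GMN}.
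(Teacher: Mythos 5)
Your proposal is correct and follows essentially the same route as the paper: both arguments reduce the problem to bounding the entropy of the renaming data (at most $V_N$ vertices, each needing $L_N\log A$ bits of new name) and then truncate on the event $\{V_N < k_N\}$, using $\Pr(V_N\ge k_N)=o(1)$ to kill the complementary contribution. The paper packages this slightly more compactly by writing $\ent(\GG^*_N)\le\ent(\GG_N)+\Ex\,\ent(\GG^*_N\mid\GG_N)$ and bounding $\ent(\GG^*_N\mid\GG_N)\le\log\bigl(V_N!\binom{A^{L_N}}{V_N}\bigr)\le V_N L_N\log A$ before splitting the expectation, which sidesteps your explicit $S_N$ bookkeeping, but the substance is identical.
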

\begin{proof}
The chain rule \eqref{H-condit} implies that
\bal{
\ent(\GG^*_N)& \le \ent(\GG_N)+\Ex \ent(\GG^*_N | \GG_N),
}
so we want to show that $\Ex \ent(\GG^*_N | \GG_N)$ is $o(N \log N)$.
Considering the number of ways of relabeling $V_N$ vertices, 
\bal{
\Ex \ent(\GG^*_N | \GG_N)&\leq \Ex \log \left(V_N!\binom{A^{L_N}}{V_N}\right), \\
	&\leq \Ex (\log A^{V_N L_N}) \Ind_{(V_N<k_N)} +\Ex(\log A^{L_N V_N}) \Ind_{(V_N\geq k_N)}, \\
	&\leq \log(A) L_N [k_N +  N \Pr(V_N\geq k_N)]=o(N\log N),
}  
as desired. \qed
\end{proof}

\noindent{\bf Remark.} The analogous lemma holds if instead we replace the labels of any random subset of vertices of $\GG_N$
to form $\GG_N^*$, provided the subset size satisfies the same assumptions as $V_N$.

\begin{Lemma}\label{LVN}
For $\GG^{ord}_N$ and $\GG_N$,
\baln{\label{evn}
\Ex |\{n \, : \, \ba^n = \ba^m \mbox{ for some } m \ne n\}| = o(N).
}
\end{Lemma}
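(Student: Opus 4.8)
The plan is to show that the expected number of vertices $n$ whose name $\ba^n$ coincides with some other $\ba^m$ is $o(N)$, by bounding, for each ordered pair $(m,n)$ with $m<n$, the probability $\Pr(\ba^m = \ba^n)$ and summing. By symmetry in the statement it suffices to control $\sum_{m<n} \Pr(\ba^m = \ba^n)$, since $\Ex|\{n : \ba^n = \ba^m \mbox{ some } m\neq n\}| \le 2 \sum_{m<n}\Pr(\ba^m = \ba^n)$. So the whole problem reduces to proving $\sum_{m<n}\Pr(\ba^m = \ba^n) = o(N)$, which would follow from, say, $\max_{m<n}\Pr(\ba^m=\ba^n) = o(N^{-1})$, or more realistically from a bound that is $o(N^{-1})$ for all but $o(N^2)$ pairs.

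The key observation is the one already isolated in the proof of Lemma~\ref{L4}: coordinate $u$ of $\ba^m$ and coordinate $u$ of $\ba^n$ are copies of the original letters at vertices $C^N_u(m)$ and $C^N_u(n)$ respectively, and \emph{conditional} on the collection of copy-sources $\{C^N_u(m), C^N_u(n) : 1 \le u \le L_N\}$, the relevant original letters are independent uniform on $\bA$ \emph{except} where two of the sources coincide. On the event $\{\TT_m \cap \TT_n = \emptyset\}$ we have $C^N_u(m) \ne C^N_u(n)$ for every $u$ (the sources for $m$ lie in $\TT_m$, those for $n$ in $\TT_n$), so conditionally $\ba^m$ and $\ba^n$ are independent uniform length-$L_N$ words, giving $\Pr(\ba^m = \ba^n \mid \TT_m \cap \TT_n = \emptyset) \le A^{-L_N} \le 1/N$ since $L_N \ge \log N/\log A$. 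On the complementary event we bound the conditional probability crudely by $1$. Hence
\[
\Pr(\ba^m = \ba^n) \le \tfrac{1}{N} + \Pr(\TT_m \cap \TT_n \ne \emptyset) \le \tfrac{1}{N} + \frac{(\alpha e^\alpha)^2 + \alpha e^\alpha}{N}
\]
by Lemma~\ref{LTn}(a). Summing over the fewer than $N^2/2$ ordered pairs $(m,n)$ with $m<n$ gives $\sum_{m<n}\Pr(\ba^m=\ba^n) = O(N)$, which is \emph{not} quite $o(N)$ — so a little more care is needed.

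To recover the $o(N)$ bound, I would sharpen the estimate of $\Pr(\TT_m \cap \TT_n \ne \emptyset)$ by using the structure of the directed sparse \ER\ graph more carefully. When $m$ and $n$ are both small, or more precisely when $\min(m,n)$ is small, the trees $\TT_m, \TT_n$ live inside $\{1,\dots,\min(m,n)\}$ and the chance they intersect is $O(\min(m,n)/N^2)$ rather than $O(1/N)$: indeed from \eqref{Edec}, $\Pr(j \in \TT_m) \le \alpha e^\alpha/N$ and $\Pr(j\in\TT_n)\le\alpha e^\alpha/N$ refine, for $j$ close to $\min(m,n)$, to bounds decaying in $\min(m,n)-j$, and the union bound over $j \le \min(m,n)$ of $\Pr(j\in\TT_m)\Pr(j\in\TT_n)$ is $O(\min(m,n)/N^2)$. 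Summing $A^{-L_N} + O(\min(m,n)/N^2)$ over pairs: the first term contributes $O(N^2 \cdot A^{-L_N}) = O(N^2/N) = O(N)$ again — so in fact even this is borderline and the cleanest route is to invoke the \emph{strict} inequality $L_N \ge \log N/\log A$ only when it is strict, or to observe that the statement of the lemma is an $o(N)$ bound for which one may harmlessly assume $L_N \to \infty$ faster, or simply to note $A^{-L_N} = N^{-1} \cdot A^{-(L_N - \log_A N)}$ and that $L_N - \log_A N \to \infty$ is part of the standing setup since $L_N \sim \beta \log_A N$ with $\beta > 1$. That factor $A^{-(L_N - \log_A N)} \to 0$ turns the first contribution into $o(N)$. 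For the intersection term, $\sum_{m<n} O(\min(m,n)/N^2) = O(N^{-2}) \cdot \sum_{m<n}\min(m,n) = O(N^{-2})\cdot O(N^3)$ — which is $O(N)$, and again this is the borderline case; here the fix is that Lemma~\ref{LTn}(a) already gives a uniform $O(1/N)$, and one refines it to $o(1/N)$ uniformly by noting $\Pr(j \in \TT_n) \le \frac{\alpha}{N}\cdot\frac{\alpha^{g-1}}{(g-1)!}$ summed over $g$, together with the fact that for the intersection to be nonempty one path has length $\ge 1$ on \emph{each} side, costing an extra factor $\alpha/N$ — giving $\Pr(\TT_m\cap\TT_n\ne\emptyset) = O(\min(m,n)/N^2)$ with no borderline issue after summation since $\sum_{m<n}\min(m,n) \le N^3/6$ and the constant can be absorbed; more cleanly, restrict attention to pairs with $\min(m,n) \le \epsilon N$, which are all but $O(\epsilon N^2)$ pairs, contributing $O(\epsilon N)$, and let $\epsilon \to 0$.

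The main obstacle, as the discussion above makes clear, is that the naive bound $\Pr(\ba^m = \ba^n) = O(1/N)$ summed over $\Theta(N^2)$ pairs gives only $O(N)$, not $o(N)$, so the proof genuinely needs either (i) the fact that $L_N$ exceeds $\log_A N$ by an unbounded amount (true since $\beta > 1$), which makes the ``independent uniform'' contribution $A^{-L_N} = o(1/N)$ per pair, together with (ii) a refinement showing $\Pr(\TT_m \cap \TT_n \ne \emptyset) = o(1/N)$ uniformly, or at least $O(\min(m,n)/N^2)$, so that restricting to the $(1-\epsilon)$-fraction of pairs with small $\min(m,n)$ and letting $\epsilon\downarrow 0$ closes the gap. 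I would organize the final write-up around exactly that two-part split: an ``independence'' estimate giving $A^{-L_N}$, and a ``collision of ancestries'' estimate from Lemma~\ref{LTn}(a) (or its refinement via \eqref{Edec}), each shown to sum to $o(N)$ using $\beta > 1$ for the first and the truncation $\min(m,n)\le\epsilon N$ for the second.
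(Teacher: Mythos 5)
There is a genuine gap in the second half of your argument. Your first step is fine: on $\{\TT_m\cap\TT_n=\emptyset\}$ the names are independent uniform by \eqref{CNu}, and $A^{-L_N}=N^{-\beta+o(1)}=o(1/N)$ because $\beta>1$, so those pairs contribute $o(N)$ in total. The problem is the complementary event. You bound $\Pr(\ba^m=\ba^n\mid \TT_m\cap\TT_n\ne\emptyset)$ crudely by $1$ and then try to show $\Pr(\TT_m\cap\TT_n\ne\emptyset)=O(\min(m,n)/N^2)$, arguing that a nonempty intersection costs a path of length $\ge 1$ on \emph{each} side. That is false: for $m<n$ the intersection point $j=m$ requires no path at all on the $m$ side (trivially $m\in\TT_m$), so a single direct edge from $n$ to $m$ --- probability $\alpha/N$ --- already forces $\TT_m\cap\TT_n\ne\emptyset$. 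Hence $\Pr(\TT_m\cap\TT_n\ne\emptyset)$ is at least of order $1/N$ uniformly over pairs, the sum over the $\Theta(N^2)$ pairs is $\Theta(N)$, and no truncation on $\min(m,n)$ can rescue this, since the direct-edge contribution is the same for every pair. (This is exactly why Lemma~\ref{LTn}(a) carries the additive term $\alpha e^\alpha/N$ coming from the case $j=m$.)

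The missing idea is that one must \emph{not} give up on the event $\{\TT_m\cap\TT_n\ne\emptyset\}$: even there, the two full names rarely coincide. The paper's proof exploits that, conditionally on $\TT_m$ and $\TT_n$, the copying histories of the $L_N$ coordinates are independent, and that on the event that both $\TT_m$ and $\TT_n$ are trees the per-coordinate ancestor-collision probability satisfies $\Pr(C_1(n)=C_1(m)\mid\TT_n,\TT_m)\le 1/2$ (inequality \eqref{LC1n}, proved via the ``first meeting point'' decomposition \eqref{CCm} and the bound \eqref{Rvn}). This gives a per-coordinate letter-match probability at most $3/4$, hence $\Pr(\ba^n=\ba^m\mid\TT_n,\TT_m)\le(3/4)^{L_N}\to 0$; combined with $\sum_m\Pr(\TT_n\cap\TT_m\ne\emptyset)=O(1)$ from Lemma~\ref{LTn}(a), each vertex's collision probability is $o(1)$, and the non-tree vertices are handled separately via Lemma~\ref{LTn}(b). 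Your write-up contains none of this, and without some quantitative control of the name-collision probability on the intersecting-ancestry event the proof cannot be completed along the lines you describe.
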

\proof 
As in Lemma~\ref{L4}, the proof is based on studying
the originating vertex $C_i(n)$ (now dropping the notational dependence on $N$)
of the letter ultimately 
copied to coordinate $i$ of vertex $n$ through the ``trees" $\TT_n$.
Given $\TT_n$, the copying mechanism that determines the name $\ba^n$ 
evolves independently for each coordinate, and this implies 
the {\em conditional independence property:} for $1 \le m < n \le N$, 
the events $\{C_i(n) = C_i(m)\}, 1 \le i \le L_N$ are
conditionally independent given $\TT_m$ and $\TT_n$.
Because
\bal{ 
&\Pr ( a^n_i = a^m_i \vert \TT_n, \TT_m) \\
&\qquad=\Pr ( C_i(n) = C_i(m) \vert \TT_n, \TT_m) + 
\sfrac{1}{A} \Pr ( C_i(n) \ne C_i(m) \vert \TT_n, \TT_m) 
}
the conditional independence property implies
\begin{equation}\label{c-i}
\begin{split}
&\Pr ( \ba^n = \ba^m \vert \TT_n, \TT_m) \\
&\quad=\left[ \Pr ( C_1(n) = C_1(m) \vert \TT_n, \TT_m) + 
\sfrac{1}{A} \Pr ( C_1(n) \ne C_1(m) \vert \TT_n, \TT_m)\right]^{L_N}.
\end{split}
\end{equation}
Now we always have $C_1(n) \in \TT_n$ so trivially
\begin{equation}
 \Pr ( C_1(n) = C_1(m) \vert \TT_n, \TT_m) = 0 
\mbox{ on } \TT_n \cap \TT_m = \emptyset . 
\label{C1C}
\end{equation}
We show below that when the sets do intersect we have
\baln{\label{LC1n}
\Pr ( C_1(n) = C_1(m) \vert \TT_n, \TT_m) \le \sfrac{1}{2} 
\mbox{ on } \{ \mbox{$\TT_n$ and $\TT_m$ are trees} \}.
}
Assuming~\eqref{LC1n}, since
$A \ge 2$, for $p \le 1/2$, we have $p + (1-p)/A \le 3/4$,
and now combining (\ref{c-i}, \ref{C1C}, \ref{LC1n}), we find
\[
\Pr ( \ba^n = \ba^m \vert \TT_n, \TT_m) 
\le (\sfrac{3}{4})^{L_N} \Ind_{( \TT_n \cap \TT_m \ne \emptyset )}  
\mbox{ on }
\{ \mbox{$\TT_n$ and $\TT_m$ are trees} \}.
\]
Now take expectation, appeal to part~(a) of Lemma~\ref{LTn}, and sum over $m$ to conclude 
\[
\Pr (\mbox{$\TT_n$ is a tree}, \ba^n = \ba^m \mbox{ for some } m \ne n \mbox{ for which 
$\TT_m$ is a tree} ) \]
\[ \le (\sfrac{3}{4})^{L_N} \ ((\alpha e^\alpha)^2 
+ \alpha e^\alpha)  
\to 0.
\] 
Now any $n$ for which the name $\ba^n$ is not unique is either in the set 
of $n$ defined by the event above, or in one of the two following sets:
\[ \{n: \mbox{$\TT_n$ is not a tree} \} \]
\[ \{n: \mbox{$\TT_n$ is a tree, $ \ba^n = \ba^m$ for some  $m\ne n$ for which $\TT_m$ is not a tree
} \]
\vspace*{-0.25in}
\[ \mbox{ but  $\ba^n \ne\ba^m$ for all  $m\ne n$ for which $\TT_m$ is a tree} \}. \]
The cardinality of the final set is at most the cardinality of the previous set, 
which by part~(b) of Lemma~\ref{LTn} has expectation 
$O(1)$.
Combining these bounds gives (\ref{evn}).

It remains only  to prove~\eqref{LC1n}.
For $v \in \TT_n$ write $R_v(n)$ for the event that the path of copying 
of coordinate $1$ from $C_1(n)$ to $n$ passes through $v$.
We may assume there is at least one edge from $n$ into $[1,n-1]$ 
(otherwise we are in the setting of (\ref{C1C})).  Given $\TT_n$, the chance that
vertex $n$ adopts the label of any given neighbor in $\TT_n$ is bounded by $1/2$,  we see
\begin{equation}
 \Pr(R_v(n) | \TT_n) \le \sfrac{1}{2}, \quad v \in \TT_n . 
\label{Rvn}
\end{equation}
Similarly by (\ref{C1C}) we may assume $ \TT_n \cap \TT_m \ne \emptyset$.
By hypothesis $\TT_n$ and $\TT_m$ are trees, and so there is a subset 
$\MM \subseteq \TT_n \cap \TT_m$ of ``first meeting" points $v$ with the property that the path from $v$ to $n$ in $\TT_n$ does not meet the 
path from $v$ to $m$ in $\TT_m$ and
\[ \{  C_1(n) = C_1(m)\} = \cup_{v \in \MM}  \left[R_v(n) \cap R_v(m) \right] \]
with a disjoint union on the right.  So
\begin{equation}
 \Pr ( C_1(n) = C_1(m) \vert \TT_n, \TT_m) 
= \sum_{v \in \MM} \Pr(R_v(n) \vert \TT_n) \times \Pr(R_v(m) \vert \TT_m) . \label{CCm}
\end{equation}
Now $v \to \Pr(R_v(n) \vert \TT_n)$ and $v \to \Pr(R_v(m) \vert \TT_m)$ 
are sub-probability distributions on $\MM$ and the former satisfies (\ref{Rvn}).  Now (\ref{CCm}) implies (\ref{LC1n}).
\qed

\subsection{The unordered model and its entropy rate}
\label{sec-unordered}
The model we introduced as $\GG_N$ in
section \ref{sec:hybrid_model} 
does not quite fit our default setting because the vertex-names will 
typically not be all distinct.  
However, if we take the ordered model $\GG_N^{ord}$ and then 
 arbitrarily 
rename the non-unique names, to obtain a model $\GG_N^{ord*}$ say,
then Lemmas \ref{L7} and \ref{LVN} imply that only a proportion $o(1)$ of vertices 
are renamed and the entropy rate is unchanged:
\[ (\GG_N^{ord*}) \mbox{ has entropy rate (\ref{hybrid-rate}) } . \]
Now we can ``ignore the order", that is replace the names 
$\{(n,\ba^n)\}$ by the now-distinct names $\{\ba^n\}$, 
to obtain a model $\GG_N^*$, say.  
In this section we will obtain the entropy rate formula for 
$(\GG_N^*)$ as 
\baln{
\mbox{(entropy rate for $\GG^*_N$)}= \mbox{ (entropy rate for $\GG^{ord*}_N$)}  - 1. \label{aut}
}

The remainder of this section is devoted to the proof of~\eqref{aut}.
Write $\HH_N$ for the \ER\ graph arising in the construction of
$\GG^{ord*}_N$; that is, each vertex $n+1$ is linked to each earlier 
vertex $i$ with probability $\alpha/N$, and we regard the created edges as  directed edges 
$(n+1,i)$. 
Now delete the vertex-labels; consider the resulting graph $\HH^{unl}_N$ 
as a random unlabelled directed acyclic graph.  
Given a realization of $\HH^{unl}_N$ there is some number 
$1 \le M(\HH^{unl}_N) \le N!$ of possible vertex orderings consistent with the edge-directions of the realization.
\begin{Lemma}\label{L5b}
In the notation above,
\baln{
\ent(\GG^{ord*}_N) = \ent(\GG_N^*)  + \Ex \log  M(\HH^{unl}_N). \label{009}
}
\end{Lemma}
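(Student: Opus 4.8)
The plan is to express both $\GG_N^{ord*}$ and $\GG_N^*$ in terms of the unlabelled directed acyclic graph $\HH_N^{unl}$ together with the vertex-names, and then apply the chain rule \eqref{H-condit} twice with a well-chosen conditioning function. First I would note that the data of $\GG_N^{ord*}$ can be recovered from the pair $(\HH_N^{unl}, \text{ordering}, \text{names})$, and that $\GG_N^*$ is the image of $\GG_N^{ord*}$ under the deterministic map $h$ that forgets the integer labels $n$ (keeping only the distinct strings $\ba^n$ attached to the now-unlabelled-but-name-bearing vertices). So $\GG_N^* = h(\GG_N^{ord*})$ and \eqref{H-condit} gives $\ent(\GG_N^{ord*}) = \ent(\GG_N^*) + \Ex\,\ent(\GG_N^{ord*}\mid \GG_N^*)$, reducing the lemma to the claim
\[
\Ex\,\ent(\GG_N^{ord*}\mid \GG_N^*) = \Ex \log M(\HH_N^{unl}).
\]

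The key step is to identify the conditional distribution of $\GG_N^{ord*}$ given $\GG_N^*$. Given $\GG_N^*$ we know the underlying unlabelled graph with its distinct names, hence we know $\HH_N^{unl}$; what we do not know is which vertex was vertex $1$, which was vertex $2$, etc. — i.e. the ordering. I claim that, conditional on $\GG_N^*$, the ordering is \emph{uniform} over the $M(\HH_N^{unl})$ orderings consistent with the edge-directions. This is the crux: it follows from the exchangeability built into the construction. In the construction of $\HH_N$ each potential directed edge $(n+1,i)$ is included independently with the same probability $\alpha/N$, and the original-name and copying randomness is assigned by a rule that is symmetric in the identities of the vertices (it depends only on the set of in-neighbours and the abstract structure). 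Consequently, relabelling the vertices by any order-preserving-up-to-consistency permutation yields an equally likely realization; so all $M(\HH_N^{unl})$ consistent orderings carry equal conditional probability given the unlabelled name-bearing graph. Granting this, the conditional distribution of $\GG_N^{ord*}$ given $\GG_N^*$ is uniform on a set of size $M(\HH_N^{unl})$, whence $\ent(\GG_N^{ord*}\mid\GG_N^*) = \log M(\HH_N^{unl})$ pointwise, and taking expectations gives \eqref{009}.

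One technical point to handle with care: the renaming step producing $\GG_N^{ord*}$ from $\GG_N^{ord}$ is (by Lemmas \ref{L7} and \ref{LVN}) a modification affecting only an $o(1)$-fraction of vertices, but it must be done in a way that does not break the exchangeability — e.g. renaming according to a rule that depends only on the unlabelled structure and the multiset of names, not on the integer labels. I would fix such a rule at the outset so that $\GG_N^{ord*}$ genuinely has distinct names and the uniformity-of-ordering claim holds verbatim; since the lemma statement already refers to $\GG_N^{ord*}$ and $\GG_N^*$ as the objects under study, this is just a matter of recording the convention.

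The main obstacle I expect is rigorously justifying the uniformity of the conditional law of the ordering given $\GG_N^*$. The heuristic is clear from exchangeability, but writing it cleanly requires: (i) specifying the construction as a deterministic function of an i.i.d. edge-indicator array and i.i.d. copying/origination choices, (ii) checking that permuting vertex-indices by any consistent reordering acts on this randomness measure-preservingly and transforms one realization of $\GG_N^{ord*}$ into another with the same $\GG_N^*$, and (iii) concluding that the pushforward onto orderings is uniform on the consistent set. Once this symmetry is set up, the rest — the two applications of the chain rule and the identification $\ent(\text{uniform on a set of size }M) = \log M$ — is immediate from the facts recalled in Section~\ref{sec:backg}.
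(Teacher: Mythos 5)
Your proposal is correct and follows essentially the same route as the paper: apply the chain rule \eqref{H-condit} to the forgetful map $\GG_N^{ord*}\mapsto\GG_N^*$, then show the conditional law of the ordering given $\GG_N^*$ is uniform on the $M(\HH_N^{unl})$ consistent orderings via the permutation-symmetry of the construction, using the distinctness of the names to see that this exhausts the fiber. Your extra remark about fixing a renaming convention that respects the exchangeability is a sensible precaution that the paper leaves implicit.
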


\smallskip  \noindent \proof
According to the chain rule (\ref{H-condit}),
\bal{
\ent(\GG^{ord*}_N)=\ent(\GG_N^*)+\Ex \ent(\GG^{ord*}_N|\GG_N^*).
}
We only need to show
\bal{
\ent(\GG^{ord*}_N|\GG_N^*)=\log  M(\HH^{unl}_N),
}
which follows from two facts: given $\GG_N^*$, all possible vertex orderings
consistent with the edge-directions of $\HH^{unl}_N$ are equally likely
and there are $M(\HH^{unl}_N)$ of these orderings. The latter fact is 
obvious from the definition and to see the former,
consider two such orderings; there is a permutation taking one to the other.
Given a realization of $\GG^{ord*}_N$ associated with the realization of 
$\HH^{unl}_N$, applying the same permutation gives a different realization of 
 $\GG^{ord*}_N$ associated with the same realization of 
$\HH^{unl}_N$.  These two realizations of  $\GG^{ord*}_N$ have the same probability, and map to the same element of
$\GG_N^*$, and (here we are using that the second part of the labels are all distinct)
this is the only way that different  realizations of  $\GG^{ord*}_N$ 
can map to the same element of $\GG_N^*$. 
\qed

So it remains only to prove
\begin{Proposition}
\label{P1}
\[  \Ex \log  M(\HH^{unl}_N)  \sim N \log N . \]
\end{Proposition}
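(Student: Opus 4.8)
The plan is to show $\log M(\HH^{unl}_N) = (1+o(1))N\log N$ by matching upper and lower bounds, both in expectation. The upper bound is immediate and costs nothing: $M(\HH^{unl}_N) \le N!$ always, so $\Ex\log M(\HH^{unl}_N) \le \log N! \sim N\log N$. So the entire content is the matching lower bound $\Ex\log M(\HH^{unl}_N) \ge (1-o(1))N\log N$.

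For the lower bound, the idea is that in a sparse directed acyclic graph on $N$ vertices with only $O(N)$ edges, almost all vertices are isolated (degree $0$ in both directions), and isolated vertices can be placed anywhere in a consistent ordering. More precisely, I would let $Z_N$ be the number of isolated vertices of $\HH^{unl}_N$; since the underlying graph is $\GG(N,\alpha/N)$, a given vertex is isolated with probability $(1-\alpha/N)^{N-1} \to e^{-\alpha}$, so $\Ex Z_N \sim e^{-\alpha}N$, and in fact $Z_N \ge \eps N$ with high probability for any $\eps < e^{-\alpha}$ by a second-moment / concentration argument (the isolation indicators are negatively correlated, or one can quote a standard concentration bound as in the Remark after Lemma~\ref{L1}). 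Now condition on the set of non-isolated vertices and all edges among them: any ordering of those vertices consistent with the edge-directions can be extended by inserting the $Z_N$ isolated vertices into the $Z_N + (\text{number of non-isolated vertices})$ positions in any of at least $Z_N!$ ways (choosing where the isolated vertices go and in what relative order among themselves — one can simply count interleavings of the isolated block into a fixed consistent ordering of the rest, giving at least $\binom{N}{Z_N}Z_N! = N!/(N-Z_N)!$ consistent orderings, or more crudely just $Z_N!$). Hence $M(\HH^{unl}_N) \ge Z_N!$ deterministically. Taking logs, $\log M(\HH^{unl}_N) \ge \log(Z_N!) \sim Z_N\log Z_N$ on the high-probability event $\{Z_N \ge \eps N\}$, which gives $\log M(\HH^{unl}_N) \ge \eps N \log(\eps N) = (1-o(1))\,\eps N\log N$ there; since $M \ge 1$ always, the contribution from the complementary event is nonnegative, so $\Ex\log M(\HH^{unl}_N) \ge (1-o(1))\eps N\log N$. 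Letting $\eps \uparrow e^{-\alpha}$ still only gives the constant $e^{-\alpha}$, not $1$ — so a cruder count of isolated vertices alone is not enough, and this is the main obstacle.

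To get the sharp constant $1$ rather than $e^{-\alpha}$, I would instead count the interleavings exactly: if $W_N$ is the number of non-isolated vertices, then every consistent ordering of the $W_N$ non-isolated vertices extends to at least $\binom{N}{W_N}$ consistent orderings of all $N$ vertices (choose the $W_N$ positions occupied by non-isolated vertices, fill them in the fixed consistent order, fill the rest with isolated vertices in increasing... — there is no order constraint, so actually $\binom{N}{W_N} W_N!/W_N! $; being careful, the clean bound is $M(\HH^{unl}_N) \ge \binom{N}{W_N}$, or even $\ge \binom{N}{W_N}\cdot(\text{orderings among non-isolated})$, but $\binom{N}{W_N}$ suffices). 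With $W_N = N - Z_N$ and $Z_N/N \to e^{-\alpha}$ in probability, $\log\binom{N}{W_N} = \log\binom{N}{Z_N} \sim Z_N\log\frac{N}{Z_N}$ by (\ref{mK}) — but again this is only order $N$, not $N\log N$, because $Z_N$ is proportional to $N$. The correct fix is to interleave the isolated vertices \emph{among themselves} as well: the isolated vertices carry no edges, so any of the $Z_N!$ relative orders of them is consistent. Combining, $M(\HH^{unl}_N) \ge \binom{N}{Z_N} Z_N! = N!/W_N!$, and on $\{Z_N \ge \eps N\}$ this is $\ge N!/((1-\eps)N)!$, whose log is $\sim N\log N - (1-\eps)N\log((1-\eps)N) = \eps N\log N + O(N)$. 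Still stuck at $\eps$.

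\textbf{Resolution.} The honest route to the constant $1$: observe that the \emph{non}-isolated part also contributes many orderings, and more importantly that one should not separate isolated from non-isolated but rather argue directly. Here is the clean argument I would actually write. Since $\HH^{unl}_N$ has $E_N$ edges with $\Ex E_N = \binom{N}{2}\alpha/N \sim \alpha N/2$, and (by (\ref{mK}) or directly) the number of labelled DAGs on $N$ vertices with at most $\alpha N$ edges is at most $G[N,\alpha N]2^{N}$ (orienting), whose log is $o(N\log N)$ by (\ref{GMN}); meanwhile the \emph{ordered} object $\GG^{ord*}_N$ determines a labelled DAG together with a vertex-labelling, and the number of unlabelled DAGs $\HH^{unl}_N$ that arise is at most the number of labelled ones, which is $\exp(o(N\log N))$. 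Therefore $\ent(\HH^{unl}_N) = o(N\log N)$. But the vertex-ordering $n\mapsto$ position, given $\HH^{unl}_N$, is uniform on a set of size $M(\HH^{unl}_N)$ (Lemma~\ref{L5b}'s proof), and the \emph{labelled} DAG $\HH_N$ (with its vertex labels $1,\dots,N$) is recovered from $\HH^{unl}_N$ together with this ordering; conversely $\HH_N$ has $\ent(\HH_N)=\log(\text{number of labelled sparse DAGs realized})$ up to the $o(N\log N)$ coming from the random number of edges, and in fact $\HH_N$ contains the full information of $\GG^{ord*}_N$'s edge-set whose entropy is $\frac{\alpha}{2}N\log N\cdot$... — this is getting circular.

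\textbf{Final plan.} I will give the two-sided bound cleanly: \emph{Upper:} $\Ex\log M \le \log N! \sim N\log N$. \emph{Lower:} Use the chain rule in the form $\log M(\HH^{unl}_N) = \ent(\sigma_N \mid \HH^{unl}_N)$ where $\sigma_N$ is the uniform consistent ordering (from the proof of Lemma~\ref{L5b}), so $\Ex\log M(\HH^{unl}_N) = \ent(\sigma_N, \HH^{unl}_N) - \ent(\HH^{unl}_N) \ge \ent(\sigma_N) - \ent(\HH^{unl}_N)$. Now $\sigma_N$ marginally is \emph{uniform on all $N!$ permutations} — because, unconditionally, relabelling the vertices of $\HH_N$ by a uniform random permutation leaves the distribution of $\HH^{unl}_N$ unchanged and makes $\sigma_N$ uniform (this uses that $\HH_N$ is exchangeable in the appropriate sense: vertex $n$'s in-edges are i.i.d.\ Bernoulli($\alpha/N$) to each earlier vertex, so the edge \emph{set} as an undirected graph is exactly $\GG(N,\alpha/N)$, fully exchangeable, and the ordering consistent with "all edges point from higher to lower" applied to a uniformly relabelled copy is uniform among consistent orderings). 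Hence $\ent(\sigma_N) = \log N! \sim N\log N$. And $\ent(\HH^{unl}_N) \le \ent(\HH_N^{\text{undir,unl}}) + \ent(\text{orientation}) \le \log G[N, E_N\text{-ish}] + N\log 2$; more carefully, conditioning on $E_N \le \beta N$ (which holds w.h.p.\ by concentration, contributing $o(N\log N)$ by the usual truncation as in Lemma~\ref{L1}) and using (\ref{GMN}), $\ent(\HH^{unl}_N) = o(N\log N)$. Therefore $\Ex\log M(\HH^{unl}_N) \ge \ent(\sigma_N) - \ent(\HH^{unl}_N) = N\log N - o(N\log N)$, completing the proof.

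The step I expect to be the main obstacle is establishing $\ent(\sigma_N) \sim N\log N$, i.e.\ that the consistent ordering is asymptotically as rich as a uniform permutation — equivalently, pinning down the marginal law of $\sigma_N$ via the exchangeability/relabelling argument and making sure the "uniform among consistent orderings" claim interacts correctly with the relabelling. The bound $\ent(\HH^{unl}_N) = o(N\log N)$ is routine given (\ref{GMN}), (\ref{mK}) and a truncation on the number of edges exactly as in the proof of Lemma~\ref{L1}.
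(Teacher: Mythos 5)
Your trivial upper bound $M(\HH^{unl}_N)\le N!$ and your diagnosis that counting isolated vertices only reaches the constant $e^{-\alpha}$ are both correct, but the ``Final plan'' has a genuine gap at its central identity $\log M(\HH^{unl}_N) = \ent(\sigma_N \mid \HH^{unl}_N)$. To make $\sigma_N$ marginally uniform you introduce an independent uniform relabelling $\pi$ of the vertices of $\HH_N$; but $\pi$ is then independent of $\HH^{unl}_N$ (a function of $\HH_N$ alone), so $\ent(\pi \mid \HH^{unl}_N) = \log N!$, not $\log M(\HH^{unl}_N)$. The ordering is confined to the $M(\HH^{unl}_N)$ consistent choices only after conditioning on the \emph{labelled} relabelled graph $\tilde\HH_N := \pi(\HH_N)$, and $\ent(\tilde\HH_N)$ is of order $N\log N$: the chain rule gives exactly $\Ex\log M(\HH^{unl}_N) = \log N! + \ent(\HH_N) - \ent(\tilde\HH_N)$, and decomposing $\ent(\tilde\HH_N)$ as $\ent(\HH^{unl}_N) + \Ex\log(N!/|\mathrm{Aut}|)$ just reproduces the quantity you are estimating, so the bookkeeping as you set it up is circular. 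Separately, your justification of $\ent(\HH^{unl}_N)=o(N\log N)$ is wrong as written: by (\ref{GMN}), $\log G[N,\beta N]\sim \beta N\log N$, which is \emph{not} $o(N\log N)$; the count that is $e^{O(N)}$ is the number of \emph{unlabelled} sparse graphs, which (\ref{GMN}) does not address.

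The approach is repairable, and the repaired version is genuinely different from the paper's proof. Keep the correct identity $\Ex\log M(\HH^{unl}_N) = \log N! + \ent(\HH_N) - \ent(\tilde\HH_N)$ (one must check that, given $\tilde\HH_N$, the permutation $\pi$ is uniform on the consistent orderings; this holds because all labelled graphs with the same edge count are equally likely under the \ER\ measure). Then bound $\ent(\tilde\HH_N)$ directly rather than through $\HH^{unl}_N$: the undirected edge set of $\tilde\HH_N$ has the same distribution as that of $\HH_N$ by exchangeability, hence entropy exactly $\binom{N}{2}\EE(\alpha/N)=\ent(\HH_N)$, and recovering the edge orientations costs at most $\Ex E_N \log 2 = O(N)$ further bits, so $\ent(\tilde\HH_N)\le \ent(\HH_N)+O(N)$ and $\Ex\log M(\HH^{unl}_N)\ge \log N! - O(N)\sim N\log N$. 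The paper instead argues combinatorially: it partitions $[1,N]$ into $K_N\sim N^{\eps}$ consecutive blocks, shows that with high probability each block contains at most $\alpha N/K_N^2$ within-block edges, and exhibits $\bigl(\bigl(\frac{N}{K_N}-\frac{2\alpha N}{K_N^2}\bigr)!\bigr)^{K_N}$ consistent orderings by permuting the unconstrained labels freely within each block, giving the lower bound $(1-\eps)N\log N$. That argument is elementary and self-contained; the entropy route, once fixed, is shorter but leans on the exchangeability of the \ER\ edge set and on the uniformity claim for $\pi$ given $\tilde\HH_N$.
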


%
%

\smallskip  \noindent \proof 
Choose $K_N \sim N^\eps$ for small $\eps > 0$ and partition the labels 
$[1,N]$ into $K_N$ consecutive intervals 
$I_1, I_2,\ldots$ each containing $N/K_N$ labels.  
Consider a realization of the (labeled) \ER\ graph $\HH_N$. 
The number $V_i$ of edges with both end-vertices in $I_i$ has 
Binomial($ \binom{N/K_N}{2}, \alpha/N$) distribution with mean 
$\sim \frac{\alpha N}{2K_N^2}$, 
and from standard large deviation bounds 
(e.g.~\cite{MR2248695} Theorem 2.15)
\[ \Pr ( V_i \le \sfrac{\alpha N}{K_N^2}, \mbox{ all } 1 \le i \le K_N) \to 1 . \]
For a realization $\HH_N$ satisfying these inequalities we have 
\[ M(\HH^{unl}_N) \ge \left( 
\left(\frac{N}{K_N} - \frac{2\alpha N}{K_N^2} \right) !
 \ \right)^{K_N} . \]
This holds because we can create permutations consistent with 
$\HH^{unl}_N $ by, on each interval $I_i$, first placing the 
(at most $ \frac{2\alpha N}{K_N^2} $) labels involved in the edges with both ends in $I_i$ in increasing order, then placing the remaining labels in arbitrary order.
So
\begin{eqnarray*}
 \Ex \log  M(\HH^{unl}_N) &\ge& (1 - o(1)) \log  
\left( 
\left(\frac{N}{K_N} - \frac{2\alpha N}{K_N^2} \right) !
 \ \right)^{K_N} \\
&\sim& K_N \times \sfrac{N}{K_N} \log \sfrac{N}{K_N} \\
&\sim& (1-\eps) N \log N
\end{eqnarray*} 
establishing Proposition~\ref{P1}.
\qed

\smallskip \noindent{\bf{Remark.}}
Proposition~\ref{P1} and Lemma~\ref{L5b}
are in the spirit of the 
graph entropy literature, but we could not
find these results there. As discussed in Section~\ref{sec:concept}, this literature
is largely concerned with the complexity of the structure of an unlabeled graph, or
in the case of~\cite{SZP}, the entropy of probability distributions on unlabeled graphs.
A quantity of interest in these settings is the ``automorphism group" of the graph which
is closely related to $M(\HH^{unl}_N)$ here.
For example, an analog of~\eqref{009} is 
shown in Lemma~1 of~\cite{SZP} and Theorem~1
there uses this lemma to
relate the entropy rate between an \ER\ graph on $N$ vertices
with edge probabilities $p_N$ with distinguished vertices and that of
the
same model where the vertex labels are ignored. 
Their result is very close to Proposition~\ref{P1}, but~\cite{SZP} only
considers
edge weights $p_N$ satisfying $N p_N/\log(N)$ bounded away from zero, which
falls outside our setting.

\section{Open problems}
\label{sec:final}
Aside from the (quite easy) Lemmas \ref{L1} and \ref{L2}, our results concern specific models.  Are there interesting ``general" results in this topic?  
Here are two possible avenues for exploration.

Given a random graph-with vertex-names $\GG = \GG_N$, there is an associated 
random unlabeled graph $\GGunl$ and an associated random unordered set of names $\Names$, and obviously 
\[ \ent(\GG) \ge \max(\ent(\GG^{unl}), \ent(\Names)) . \]
Lemmas \ref{L1} and \ref{L2}, applied conditionally as indicated in Section \ref{sec:zero_rate},
give sufficient conditions for 
$\ent(\GG) = \ent(\GG^{unl})$ or $\ent(\GG) = \ent(\Names)$. 
In general one could ask
``given $\GG^{unl}$ and $\Names$, how random is the assignment of names to vertices?"
The standard notion of {\em graph entropy} enters here, as a statistic of the ``completely random" assignment, so the appropriate conditional graph entropy within a model constitutes a measure of relative randomness.  
Another question concerns measures of strength of association of names across edges.
One could just take the space $\bA^{L_N}$ of possible names, consider
the empirical distribution across edges $(v,w)$ of the pair  of names
$(\ba(v), \ba(w))$ as a distribution on the product space $\bA^{L_N} \times \bA^{L_N}$ 
and compare with the  product measure using
some quantitative measure of dependence.
But neither of these procedures quite gets to grips with the issue of finding
conceptually interpretable quantitative measures 
of dependence between graph structure and name structure, which we propose as an open problem.
  
A second issue concerns ``local" upper bounds for the entropy rate.
In the classical context of sequences $X_1,\ldots,X_n$ from $\bA$, 
an elementary consequence of subadditivity  
is that (without any further assumptions) one can upper bound 
$\ent(X_1,\ldots,X_n)$ in terms of the ``size-$k$ random window" entropy
\[ \EE_{n,k} := \ent(X_U, X_{U+1}, \ldots,X_{U+k-1}); \quad U 
\mbox{ uniform on } [1,n-k+1]   \]
and this is optimal in the sense that  for a stationary ergodic sequence the 
``global" entropy rate is actually equal to the quantity 
\[ \lim_{k \to \infty} \lim_{n \to \infty} k^{-1} \EE_{n,k} \]
arising from this ``local" upper bound.  
In our setting we would like some analogous result saying that, for the entropy 
$\EE_{N,k}$ of the restriction of $\GG_N$ to some ``size-$k$" neighborhood of a random vertex, there is always an upper 
bound for the entropy rate $c$ of the form
\[ c \le \lim_{k \to \infty} \lim_{N \to \infty}  \frac{ \EE_{N,k}}{k \log N}  \]
and that this is an equality under some ``no long-range dependence" condition analogous to 
ergodicity.  But results of this kind seem hard to formulate, because
of the difficulty in specifying which vertices and edges are to be included in the ``size-$k$" neighborhood.

\paragraph{Acknowledgement.}
The hybrid model arose from a conversation with Sukhada Fadnavis.


\section{Appendix}

\paragraph{Small worlds model: $0<\gamma<2$.}
Here we complete the analysis of the  graph entropy rate 
in the ``small worlds" model of Section \ref{sec:SW}. 
First we show that for $a$ as in \eqref{sw-d2}, the average degree tends to a constant.
For $D_u=(n-1)/2+b$ and $D_l=(n-1)/2-b$, where $b$ is constant with respect to $N$ (and chosen
large enough for the inequalities below to hold), we find using \eqref{sw1}
that
\begin{eqnarray}
 8a \int_0^{\pi/4}\int_1^{D_l\sec(\theta)} r^{-\gamma+1} dr d\theta
&\leq  & \Ex D(v) -4 \nonumber\\
&\leq & 8a \int_0^{\pi/4}\int_1^{D_u\sec(\theta)} r^{-\gamma+1} drd\theta+\sfrac{4a}{2^{\gamma/2}}, \nonumber\\
\sfrac{8aD_l^{2-\gamma}}{2-\gamma}\int_0^{\pi/4} \sec^{2-\gamma}(\theta) d\theta -\sfrac{2a\pi}{2-\gamma}
&\leq&  \Ex D(v) -4 \label{44} \\
&\leq & \sfrac{8aD_u^{2-\gamma}}{2-\gamma}\int_0^{\pi/4} \sec^{2-\gamma}(\theta) d\theta -\sfrac{2a\pi}{2-\gamma}+\sfrac{4a}{2^{\gamma/2}}. \nonumber
\end{eqnarray}

Taking $a$ and $\kappa_\gamma$ as in \eqref{sw-d2}, the inequalities above imply
$\Ex D(v)\to4+\alpha$.

To show the entropy rate is as claimed, take $N$ large enough to make $a<1/2$, so that
$\EE(ar^{-\gamma})$ is a decreasing function of $r$ for $r>1$. 
Using the inequality $-(1-x)\log(1-x)\leq x$ for $0<x<1$,   
\baln{ 
\frac{-8a}{\log(N)}&\int_0^{\pi/4}\int_1^{D_l\sec(\theta)} \EE (ar^{-\gamma}) r dr d\theta \notag \\
&\qquad\sim\frac{-8a}{\log(N)}\int_0^{\pi/4}\int_1^{D_l\sec(\theta)} r^{-\gamma+1} \log(a r^{-\gamma}) dr d\theta, \label{3}
}
and we will show \eqref{3} tends to $\alpha$ as $N\to\infty$.
From this point, similar arguments show the same is true with $D_l$ replaced by $D_u$, so that 
following the arguments that established the convergence of the average degree and using~\eqref{2}, we find
\bal{
\lim_{N\to\infty} \frac{\ent(\GG_N)}{N\log(N)}=\frac{\alpha}{2}, 
}
as desired. To obtain the claimed asymptotic, note that we can write~\eqref{3} as
\baln{
&-8a \log(a)\int_0^{\pi/4}\int_1^{D_l\sec(\theta)} r^{-\gamma+1} dr d\theta \label{5} \\
&\qquad+ 8a\gamma \int_0^{\pi/4}\int_1^{D_l\sec(\theta)} r^{-\gamma+1} \log(r) dr d\theta.  \label{6}
}
From \eqref{44} above and the definition \eqref{sw-d2} of $a$, it is easy to see
that \eqref{5} is 
\baln{ \label{64}
\alpha(1-\gamma/2)\log(N)+\textrm{o}(\log(N)) \mbox{ as \,} N\to\infty.
}
Now, making the substitution $u=r^{2-\gamma}$, \eqref{6} is equal to
\bal{
&\frac{8a\gamma}{(2-\gamma)^2} \int_0^{\pi/4}\int_1^{D_l^{2-\gamma} \sec^{2-\gamma}(\theta)}\log(u) du d\theta \\
&\qquad=\frac{8a\gamma}{(2-\gamma)^2} \int_0^{\pi/4} D_l^{2-\gamma} \sec^{2-\gamma}(\theta) [\log( D_l^{2-\gamma} \sec^{2-\gamma}(\theta))-1] d\theta.
}
After simplification, the only term that is not $\textrm{o}(\log(N))$ is
\bal{
&\frac{8a\gamma}{(2-\gamma)^2}  D_l^{2-\gamma} \log( D_l^{2-\gamma}) \int_0^{\pi/4}\sec^{2-\gamma}(\theta) d\theta,
}
which after simplification is equal to
\baln{ \label{65}
\frac{\alpha \gamma}{2}\log(N)+\textrm{o}(\log(N)).
}
Combining \eqref{64} and \eqref{65} with \eqref{5} and \eqref{6} implies \eqref{3} tends to $\alpha$ as $N\to\infty$.

\end{document}